\newtheorem{theorem}{\textbf Theorem}[section]
\newtheorem{lemma}{\textbf Lemma}[section]
\newcommand{\be}{\begin{eqnarray}}
\newcommand{\ee}{\end{eqnarray}}
\newcommand{\bes}{\begin{eqnarray*}}
\newcommand{\ees}{\end{eqnarray*}}
\begin{document}
\begin{titlepage}%\\ Hunan Key Laboratory for Computation and Simulation in Science and Engineering,
\title{\bf  Stability and inviscid limit of the 3D anisotropic MHD system near a background magnetic field with mixed fractional partial dissipation}
\author{
        Xuemin Deng, Yuelong Xiao
        \\School of Mathematics and Computational Science,
       \\ Xiangtan University,  Xiangtan, Hunan,  411100,  China.
       \\ Aibin Zang\thanks{Corresponding Author: A. Zang}
       \\ The Center of Applied Mathematics, \\Yichun University Yichun, Jiangxi, 336000, China.
       \\( dxm@smail.xtu.edu.cn, xyl@xtu.edu.cn, abzang@jxycu.edu.cn)
          }
\date{}
\end{titlepage}
\maketitle
% \ead{dxm@smail.xtu.edu.cn(X. Deng),xyl@xtu.edu.cn(Y. Xiao),abzang@jxycu.edu.cn(A. Zang).}

\begin{abstract}
A main result of this paper establishes the global stability of the three-dimensional MHD equations near a background magnetic field with mixed fractional partial dissipation with $\alpha, \beta\in(\frac{1}{2}, 1]$.
Namely, the velocity equations involve dissipation $(\Lambda_1^{2\alpha} + \Lambda_2^{2\alpha}+\sigma \Lambda_3^{2\alpha})u$ with the case $\sigma=1$ and $\sigma=0$.  The magnetic equations without partial magnetic diffusion $\Lambda_i^{2\beta} b_i $ but with the diffusion $(-\Delta)^\beta b$, where $\Lambda_i^{s} (s>0)$ with $i=1, 2, 3$ are the directional fractional operators.
Then we focus on the vanishing vertical kinematic viscosity coefficient limit of the MHD system with the case $\sigma=1$ to the case $\sigma=0$. The convergent result is obtained in the sense of $H^1$-norm.
\end{abstract}
\vspace{.2in} {\bf Key words:} MHD equations; Stability; Fractional partial dissipation; Inviscid limit.\\
\vspace{.2in} {\bf MSC:} 35B35;  35Q35;   76B03.
\section{Introduction}

~~~~The magnetohydrodynamic (MHD) system widely describes the dynamics of electrically conducting fluids such as plasmas, liquid metals, and salt water or electrolytes(see, e.g., \cite{BiskampDieter1993Nonlinear,Pippard1989inMetals}).
The 3D full fractional MHD system reads
\begin{eqnarray}\label{B}
\begin{cases}
\partial_{t} U+ U \cdot\nabla U =-\left(\nu_{1}\Lambda_1^{2\alpha} + \nu_{2}\Lambda_2^{2\alpha}+ \nu_{3}\Lambda_3^{2\alpha} \right) U+\nabla P-B \cdot\nabla B,  \hspace{0.5cm} (t, x)\in \mathbb{R}_{+}\times\mathbb{R}^3,\\
\partial_{t} B+ U \cdot\nabla B =-\mu\left(-\Delta\right)^\beta B-B \cdot\nabla U, \\
div ~U=div ~B=0, \\
(U, B)\vert _{t=0}=(U_{in}, B_{in}),
\end{cases}
\end{eqnarray}
where $U=\left(U_1, U_2, U_3\right)(t, x)$, $B=\left(B_1, B_2, B_3\right)(t, x)$ and $P=P(t, x)$ denote the three-component velocity field, the three-component magnetic field and the pressure, respectively.
The kinematic viscosity $\nu_1, \nu_2, \nu_3>0$ and the magnetic diffusivity $\mu>0$. The parameters $\alpha, \beta\geq0$ are fractional derivatives and the operator $(-\Delta)^{s}$ is fractional Laplacian and defined by Fourier transform,
$$\widehat{(-\Delta)^s f}(\xi)=|\xi|^{2s}\widehat{f}(\xi),$$
with
$$\widehat{f}(\xi)=\frac{1}{(2\pi)^{\frac{d}{2}}}\int_{\mathbb{R}^d} e^{-ix\cdot\xi}f(x)dx.$$
 $\Lambda_i^{s}$ with $i=1, 2, 3$ is the directional fractional operator and defined by Fourier transform,
$$\widehat{\Lambda_i^s f}(\xi)=|\xi_i|^{2s}\widehat{f}(\xi), \quad \xi=(\xi_1, \xi_2, \xi_3).$$

For the standard MHD equations $(\alpha=\beta=1)$ with full dissipation or partial dissipation, the global well-posedness
problem has recently attracted considerable attention and significant progress has been made
(see~\cite{CaoChongsheng2011horizontal, CaoChongsheng2011regularity, CaoChongsheng2014onlymagnetic, SermangeMichel1983MHD, WangFan2013mixedpartial, WuJiahong2003MHD}).
For the MHD system with full fractional dissipation or no dissipation, some results can be referenced (see~\cite{DaiYichen20203fractional, DaiYichen20192fractional, FanJishan20142fractional, JiEunjeong20132fractional, JiuQuansen20203fractional, WangWeihua20193fractional, WuJiahong20182fractional}).

In recently years, there have also been some results of the global well-posedness of the MHD equations involved only partial fractional dissipation. Dong et al. \cite{DongBQ2018partialmagnetic} obtained the 2D MHD system with the full fractional velocity dissipation and partial magnetic diffusion has a unique global solution in the initial data $(u_{in}, b_{in})\in H^s(\mathbb{R}^2), s\geq3$.
Then, for the 2D MHD equations ($\beta>1$) with the only partial magnetic diffusion and no velocity dissipation, Dong, Jia and Wu \cite{DongBQ2019onlymagnetic} proved that it has a globally unique solution in the initial data $(u_{in}, b_{in})\in H^s(\mathbb{R}^2), s>2$.
In \cite{yang2019fractional}, Yang, Wu and Jiu established the global existence and uniqueness of strong solutions to the 3D hyperdissipative MHD equations with only partial velocity dissipation and horizontal magnetic diffusion.
Our aim is to understand the stability problem of perturbations near a background magnetic field of the following MHD system both the full dissipation case and mix fractional partial dissipation case as,
\begin{eqnarray}\label{BHE}
\begin{cases}
\partial_{t} U+ U \cdot\nabla U=-\left(\nu_{1}\Lambda_1^{2\alpha} + \nu_{2}\Lambda_2^{2\alpha}+\sigma \nu_{3}\Lambda_3^{2\alpha} \right)U+\nabla P-B \cdot\nabla B, \hspace{0.5cm} (t, x)\in \mathbb{R}_{+}\times\mathbb{R}^3,\\
\partial_{t} B+ U \cdot\nabla B=-\mu\begin{bmatrix} \Lambda_2^{2\beta} + \Lambda_3^{2\beta}\\ \Lambda_1^{2\beta} + \Lambda_3^{2\beta}\\ \Lambda_1^{2\beta} + \Lambda_2^{2\beta} \end{bmatrix} B-B \cdot\nabla U, \\
div ~U=div ~B=0, \\
(U, B)\vert _{t=0}=(U_{in}, B_{in}),
\end{cases}
\end{eqnarray}
where $\sigma=1$ corresponds to the full dissipation case and $\sigma=0$ to the horizontal dissipation case.
For the $\sigma=0$ case, the each velocity equation has dissipation in the same direction and the each magnetic equation does not magnetic diffusion in the same direction.
Namely, for the velocity equations, the each component equation involves only horizontal dissipation.
For the magnetic equations of the system (\ref{BHE}), the first component equation involves no fractional dissipation in the $x_1$-direction, the second component equation involves no fractional dissipation in the $x_2$-direction and the third component equation involve no fractional dissipation in the $x_3$-direction.

The equilibrium
$$ (U_e, B_e, P_e)=(0, e_3, 0)$$
is special steady solution of the system (\ref{BHE}). The perturbations near a background magnetic field
$$u=U-U_e, \quad b=B-B_e, \quad p=P-P_e$$
satisfy
\begin{eqnarray}\label{bhe}
\begin{cases}
\partial_{t} u+ u \cdot\nabla u=-\left(\nu_{1}\Lambda_1^{2\alpha} + \nu_{2}\Lambda_2^{2\alpha}+ \sigma\nu_{3}\Lambda_3^{2\alpha} \right) u+\nabla p-b \cdot\nabla b+\partial_3 b, ~(t, x)\in \mathbb{R}_{+}\times\mathbb{R}^3,\\
\partial_{t} b+ u \cdot\nabla b~=-\mu\begin{bmatrix} \Lambda_2^{2\beta} + \Lambda_3^{2\beta}\\ \Lambda_1^{2\beta} + \Lambda_3^{2\beta}\\ \Lambda_1^{2\beta} + \Lambda_2^{2\beta} \end{bmatrix} b-b \cdot\nabla u+\partial_3 u, \\
div~u=div~b=0, \\
(u, b)\vert _{t=0}=(u_{in}, b_{in}) . \\
\end{cases}
\end{eqnarray}
The full fractional Laplacian dissipation can be replaced by a partial fractional Laplacian dissipation of the fractional MHD system in certain physical regimes and under suitable scaling. In addition, the global stability of the MHD equations near a background magnetic field which has recently drawn a lot of attention.

In two dimensions, Hu and Lin \cite{Lin2014tability} obtained the stability of the MHD equations with with zero magnetic diffusivity. Lin et al. \cite{Lin2020tability} researched the stabilization effect of a background magnetic field of the standard MHD system with only vertical velocity dissipation and horizontal magnetic diffusion.
The stability of the MHD equations was established in which only the vertical component equation in the velocity equation and the horizontal component equation in the magnetic equation involve full dissipation by Li, Wu and Xu \cite{XuXiaojing2020tability}.
Feng, Wang and Wu \cite{Feng2023tability} investigated the stability of the MHD equations with only horizontal fractional velocity dissipation and magnetic diffusion dissipation. More results on the stability of various partially or fractionally dissipated MHD equations under the magnetic background can be found in these references \cite{Boardman2020stability, Lai2021tability, Lai2022tability, LinFanghua2015mhd, GuoYana2022tability}.

In three dimensions, the stability problem of the standard MHD system near a background magnetic field was studied by Wu and Zhu \cite{Wujiahong2021tability} in the case when the velocity dissipation only occurs in the horizontal direction and the magnetic diffusion only occurs in the vertical direction.
Ji and Tian \cite{JiRuihong2021tability} showed the stability of the standard MHD equations with only horizontal velocity dissipation and magnetic diffusion in periodic domain.
Very recently, it was proved by Ji, Jiang and Luo \cite{JiRuihong2023tability} and independently by Li, Wang and Zheng \cite{LiJingna2023tability} that the stability of the MHD equations with only fractional horizontal velocity dissipation and magnetic diffusion in the different fractional range $\alpha, \beta\in(\frac{1}{2}, 1]$ and $\alpha=\beta\in[\frac{1}{2}, 1)$, respectively.
More results on the stability of the MHD system can be referred to (\cite{AbidiHammadi2017tability, DengWen2018tability, LinFanghua2014mhd, ShangHaifeng2022tability, LuCheng2022tability, ZhengDahao2023tability}).

The MHD equations with fractional partial dissipation is very important in mathematically.
Another purpose of our study is to understand the stability of a family of the MHD system (\ref{bhe}) when the sizes of the index $\alpha$ and $\beta$ of the partial fractional dissipation vary.
In contrast to \cite{JiRuihong2023tability},
the each component of the magnetic equations of the MHD system we investigated does not involve its own dissipation.
More precisely, due to the lack of the dissipation $\Lambda_1^{2\beta} b_1$, $\Lambda_2^{2\beta} b_2$ and $\Lambda_3^{2\beta} b_3$ in the magnetic equations of the (\ref{bhe}), we have to overcome some new difficulties.

Here we present two main results in this paper. The first is to investigate the global stability of the MHD system (\ref{bhe}) near a background magnetic field with $\alpha, \beta\in(\frac{1}{2}, 1]$ in $H^3(\mathbb{R}^3)$.

Our main result reads:
\begin{theorem}\label{theoreml}
Consider the system $(\ref{bhe})$ for $\alpha, \beta\in(\frac{1}{2}, 1]$. Let the initial data $(u_{in}, b_{in})\in H^3(\mathbb{R}^3)$ and $div~u_{in}=div~b_{in}=0$ such that
$$\|(u_{in}, b_{in})\|_{H^3}\leq\epsilon$$
for some sufficiently small $\varepsilon>0$.
Then there exists a unique global solution $(u, \theta)$ satisfying
\begin{multline}\label{neng}
\|(u, b)\|_{H^3}^2+\int_0^t \|(\nu_1^{\frac{1}{2}}\Lambda_1^{\alpha}, \nu_2^{\frac{1}{2}}\Lambda_2^{\alpha})u\|_{H^3}^2 d\tau+\sigma\nu_3\int_0^t \|\Lambda_3^{\alpha}u\|_{H^3}^2 d\tau \\
+\mu\int_0^t \|(\Lambda_2^{\beta}, \Lambda_3^{\beta})b_1\|_{H^3}^2+\|(\Lambda_1^{\beta}, \Lambda_3^{\beta})b_2\|_{H^3}^2+\|(\Lambda_1^{\beta}, \Lambda_2^{\beta})b_3\|_{H^3}^2 d\tau\leq C\epsilon^2,
\end{multline}
where the constant $C>0$ is independent of $\epsilon$ and $t$.
\end{theorem}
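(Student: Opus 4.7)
The plan is to combine local well-posedness in $H^3$ with an a priori energy estimate via a standard bootstrap (continuation) argument. Introduce
\[
E(t) := \sup_{0\le\tau\le t}\|(u,b)(\tau)\|_{H^3}^2 + \int_0^t D(\tau)\,d\tau,
\]
where $D(\tau)$ collects all the dissipation norms appearing on the left-hand side of (\ref{neng}). Local existence in $H^3$ is routine for $\al,\beta>\tfrac12$, so it suffices to prove the closed inequality
\[
E(t) \le C\|(u_{in},b_{in})\|_{H^3}^2 + C\,E(t)^{3/2}.
\]
Smallness of $\ep$ then forces $E(t)\le C\ep^2$ for every $t\ge 0$ by continuity.

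To derive the a priori inequality, apply $\pt^k$ for $|k|\le 3$ to both equations in (\ref{bhe}), test against $\pt^k u$ and $\pt^k b$ respectively, and add. Three structural facts are used: (i) the linear coupling terms coming from $\pt_3 b$ in the $u$-equation and $\pt_3 u$ in the $b$-equation cancel after integration by parts; (ii) the pressure drops by $\mx{div}\,u=0$; (iii) the fractional dissipation reproduces exactly $D(\tau)$. What remains is to estimate four trilinear contributions
\[
I_1\!=\!\!\int\!\pt^k(u\!\cdot\!\nabla u)\!\cdot\!\pt^k u,\;\; I_2\!=\!\!\int\!\pt^k(u\!\cdot\!\nabla b)\!\cdot\!\pt^k b,\;\; I_3\!=\!\!\int\!\pt^k(b\!\cdot\!\nabla b)\!\cdot\!\pt^k u,\;\; I_4\!=\!\!\int\!\pt^k(b\!\cdot\!\nabla u)\!\cdot\!\pt^k b,
\]
each of which, after a Leibniz expansion, is a sum of integrals of the form $\int(D^a f)(D^b g)(D^c h)\,dx$ with $|a|+|b|+|c|\le 4$. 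The goal is to control each by $C\,E(t)^{1/2}D(t)$.

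The main obstacle lies in $I_3$ and $I_4$: because $b_i$ does \emph{not} carry the self-direction dissipation $\La_i^{2\beta}b_i$, any piece that forces a $\pt_i b_i$ onto a factor of insufficient regularity cannot be absorbed directly. I plan to circumvent this systematically by invoking $\mx{div}\,b=0$ to rewrite $\pt_i b_i=-\sum_{j\ne i}\pt_j b_j$, converting the single missing direction into two available ones. Combined with anisotropic Sobolev and Gagliardo--Nirenberg inequalities of the form
\[
\|f\|_{L^\infty_{x_i}L^2_{x_j x_k}}\lesssim \|f\|_{L^2}^{1/2}\|\La_i^{1/2+\delta}f\|_{L^2}^{1/2},\qquad \|f\|_{L^\infty}\lesssim \|\La_1^{s_1}\La_2^{s_2}\La_3^{s_3}f\|_{L^2}\;\;(s_i>\tfrac12),
\]
one can redistribute derivatives so that every factor is controlled by at most one fractional derivative in a direction where the dissipation is present. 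The threshold $\al,\beta>\tfrac12$ is used precisely to make these anisotropic embeddings available, and also to absorb the small derivative deficit in the convective terms of $I_1$ and $I_2$.

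Putting the bounds $|I_j|\le C\,E(t)^{1/2}D(t)$ together and integrating in time yields the target cubic inequality $E(t)\le C\ep^2 + CE(t)^{3/2}$; choosing $\ep$ sufficiently small closes the bootstrap, delivers (\ref{neng}), and guarantees that the local solution can be extended to a unique global solution, proving Theorem \ref{theoreml}.
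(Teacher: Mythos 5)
Your proposal follows essentially the same route as the paper: an $H^3$ energy estimate exploiting the cancellation of the linear coupling terms, the divergence-free conditions to rewrite the missing self-direction derivatives $\pt_i b_i$ (and $\pt_3 u_3$) in terms of directions where dissipation is available, anisotropic Gagliardo--Nirenberg/Sobolev inequalities to close each trilinear term by $CE(t)^{1/2}D(t)$, and the resulting cubic inequality $E(t)\le C\ep^2+CE(t)^{3/2}$ closed by a bootstrapping argument. The only piece the paper carries out that your sketch leaves implicit is the separate $L^2$ difference estimate with Gr\"onwall's inequality for uniqueness, which is routine.
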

Besides the study on the stability of the MHD equations, many scholars has been made many excellent work on the inviscid limit of the standard MHD system with full dissipation (see, e. g. , \cite{DuanQin2022Vanishing, FanJishan2007Vanishing, MaafaYoussouf2023Vanishing, XiaoYuelong2009Vanishing, WangNa2018Vanishing, WuZhonglin2018Vanishing}).
For the 3D standard anisotropic MHD equations with Dirichlet boundary conditions, Wang and Wang \cite{WangShu2017Vanishing} provided a $L^2$-convergence result of the viscous solutions when the vertical kinematic viscosity coefficient and magnetic diffusion coefficient go to zero. For the fractional MHD equations with full or partial dissipation, the vanishing viscosity limit problem has been researched relatively little.

Our second goal is to understand its inviscid limit problem which the following MHD system as the vertical viscosity coefficient $\nu$ goes to zero.
\begin{eqnarray}\label{bhev}
\begin{cases}
\partial_{t} u^{\nu}+ u^{\nu} \cdot\nabla u^{\nu}=-~\left(\Lambda_1^{2\alpha} + \Lambda_2^{2\alpha}+ \nu\Lambda_3^{2\alpha} \right) ~u^{\nu}+\nabla p^{\nu}-b^{\nu} \cdot\nabla b^{\nu}+\partial_3 b^{\nu}, \\
\partial_{t} b^{\nu}+ u^{\nu} \cdot\nabla b^{\nu}~=-\begin{bmatrix} \Lambda_2^{2\beta} + \Lambda_3^{2\beta}\\ \Lambda_1^{2\beta} + \Lambda_3^{2\beta}\\ \Lambda_1^{2\beta} + \Lambda_2^{2\beta} \end{bmatrix} b^{\nu}-b^{\nu} \cdot\nabla u^{\nu}+\partial_3 u^{\nu}, \\
div~u^{\nu}=div~b^{\nu}=0, \\
(u^{\nu}, b^{\nu})\vert _{t=0}=(u_{in}, b_{in}). \\
\end{cases}
\end{eqnarray}
When the vertical viscosity coefficient $\nu$ is vanishing, the MHD system (\ref{bhev}) degenerates into the system (\ref{bhev0}) as follow
\begin{eqnarray}\label{bhev0}
\begin{cases}
\partial_{t} u^{0}+ u^{0} \cdot\nabla u^{0}=-~\left(\Lambda_1^{2\alpha} + \Lambda_2^{2\alpha} \right)~ u^{0}+\nabla p^{0}-b^{0} \cdot\nabla b^{0}+\partial_3 b^{0}, \\
\partial_{t} b^{0}+ u^{0} \cdot\nabla b^{0}~=-\begin{bmatrix} \Lambda_2^{2\beta} + \Lambda_3^{2\beta}\\ \Lambda_1^{2\beta} + \Lambda_3^{2\beta}\\ \Lambda_1^{2\beta} + \Lambda_2^{2\beta} \end{bmatrix} b^{0}-b^{0} \cdot\nabla u^{0}+\partial_3 u^{0}, \\
div~u^{0}=div~b^{0}=0, \\
(u^{0}, b^{0})\vert _{t=0}=(u_{in}, b_{in}). \\
\end{cases}
\end{eqnarray}

Under the assumptions of Theorem \ref{theoreml}, the systems (\ref{bhev}) and (\ref{bhev0}) have a unique global respectively solution $(u^{\nu}, b^{\nu})$ and $(u^{0}, b^{0})$ satisfying (\ref{neng}).

Our second main result presents the inviscid limit problem.
\begin{theorem}\label{theorem2}
Assume that $(u^{\nu}, b^{\nu})$ and $(u^{0}, b^{0})$ are respectively the solution to the system (\ref{bhev}) and the system (\ref{bhev0}) with the same initial data $(u_{0}, b_{0})$ under the conditions in Theorem \ref{theoreml}.
Then we have
\begin{align*}
\|u^{\nu}-u^{0}\|_{{L^{\infty}_t}H^1}+\|b^{\nu}-b_{0}\|_{{L^{\infty}_t}H^1}\leq C(t)\nu.
\end{align*}
\end{theorem}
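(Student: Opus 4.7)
The plan is to set the differences $\tilde u=u^\nu-u^0$, $\tilde b=b^\nu-b^0$, $\tilde p=p^\nu-p^0$, subtract (\ref{bhev0}) from (\ref{bhev}), and close an $H^1$ energy estimate. The resulting system reads
\begin{align*}
\partial_t\tilde u + u^\nu\cdot\nabla\tilde u + \tilde u\cdot\nabla u^0 + (\Lambda_1^{2\alpha}+\Lambda_2^{2\alpha})\tilde u &= -\nu\Lambda_3^{2\alpha}u^\nu - \nabla\tilde p - b^\nu\cdot\nabla\tilde b - \tilde b\cdot\nabla b^0 + \partial_3\tilde b, \\
\partial_t\tilde b + u^\nu\cdot\nabla\tilde b + \tilde u\cdot\nabla b^0 + \mathcal{M}\tilde b &= -b^\nu\cdot\nabla\tilde u - \tilde b\cdot\nabla u^0 + \partial_3\tilde u,
\end{align*}
where $\mathcal{M}$ denotes the diagonal anisotropic magnetic dissipation operator appearing in (\ref{bhev0}). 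The crucial feature is that this system has precisely the structure of (\ref{bhe}) plus a single forcing $-\nu\Lambda_3^{2\alpha}u^\nu$, whose size is $O(\nu\varepsilon)$ in any Sobolev space up to $H^3$ thanks to the uniform-in-$\nu$ bound $\|u^\nu\|_{H^3}\leq C\varepsilon$ supplied by Theorem \ref{theoreml}. Moreover, the initial data for the difference vanishes, $\tilde u(0)=\tilde b(0)=0$.

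I would first carry out the $L^2$ estimate. Pairing the $\tilde u$-equation with $\tilde u$ and the $\tilde b$-equation with $\tilde b$, three structural cancellations occur: the transport terms vanish by $\nabla\cdot u^\nu=0$; the magnetic couplings satisfy $\int b^\nu\cdot\nabla\tilde b\cdot\tilde u + \int b^\nu\cdot\nabla\tilde u\cdot\tilde b=0$ by $\nabla\cdot b^\nu=0$; and $\int\partial_3\tilde b\cdot\tilde u+\int\partial_3\tilde u\cdot\tilde b=0$ after integration by parts. The remaining terms $\int\tilde u\cdot\nabla u^0\cdot\tilde u$, $\int\tilde b\cdot\nabla b^0\cdot\tilde u$, $\int\tilde u\cdot\nabla b^0\cdot\tilde b$ and $\int\tilde b\cdot\nabla u^0\cdot\tilde b$ are controlled via $\|\nabla(u^0,b^0)\|_{L^\infty}\leq C\|(u^0,b^0)\|_{H^3}\leq C\varepsilon$, while the forcing contributes at most $C\nu\varepsilon\|\tilde u\|_{L^2}$. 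For the $\dot H^1$ estimate I apply $\partial_k$ for $k=1,2,3$, pair with $\partial_k\tilde u$ and $\partial_k\tilde b$, and sum; the same three cancellations persist. The delicate commutator-type term $\int\tilde u\cdot\nabla\partial_k u^0\cdot\partial_k\tilde u$ is handled by H\"older together with the $\mathbb R^3$ embeddings $\|\tilde u\|_{L^6}\leq C\|\nabla\tilde u\|_{L^2}$ and $\|\nabla^2 u^0\|_{L^3}\leq C\|u^0\|_{H^3}$, giving a contribution $\leq C\varepsilon\|\nabla\tilde u\|_{L^2}^2$; the analogous magnetic terms are treated identically, and the forcing contributes $\leq C\nu\varepsilon\|\nabla\tilde u\|_{L^2}$ since $1+2\alpha\leq 3$.

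Summing the $L^2$ and $\dot H^1$ bounds, absorbing the $O(\varepsilon)$ nonlinear contributions into the partial dissipation via the smallness of $\varepsilon$ from Theorem \ref{theoreml}, and using the vanishing initial data, Gronwall's inequality yields $\|(\tilde u,\tilde b)(t)\|_{H^1}^2\leq C(t)\nu^2$ with $C(t)$ of the form $C\varepsilon^2 t\,e^{C\varepsilon t}$, from which Theorem \ref{theorem2} follows by taking square roots. The main obstacle is that the difference system inherits only the partial anisotropic dissipation of (\ref{bhev0}), so the nonlinear contributions at the $H^1$ level cannot be absorbed by the dissipation alone; closing the estimate requires the uniform smallness of $(u^\nu,b^\nu)$ and $(u^0,b^0)$ in $H^3$ delivered by Theorem \ref{theoreml}, mirroring in spirit the difficulty that makes that theorem itself nontrivial, but now at the reduced regularity level $H^1$ that matches the convergence rate $O(\nu)$.
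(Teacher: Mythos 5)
Your proposal is correct and follows the same architecture as the paper's proof: form the difference system for $(\tilde u,\tilde b)=(u^{\nu}-u^{0},\,b^{\nu}-b^{0})$, perform $L^2$ and $\dot H^1$ energy estimates, exploit the three cancellations you list, control the remaining terms through the uniform $H^3$ bounds supplied by Theorem \ref{theoreml}, and conclude by Gr\"onwall with vanishing initial data. Two points of divergence are worth recording. First, where you invoke the isotropic embeddings $\|\tilde u\|_{L^6}\le C\|\nabla\tilde u\|_{L^2}$, $\|\nabla^2 u^0\|_{L^3}\le C\|u^0\|_{H^3}$ and $\|\nabla u^{\nu}\|_{L^\infty}\le C\|u^{\nu}\|_{H^3}$, the paper instead re-deploys the anisotropic triple-product inequalities of Lemma \ref{lemmage} and absorbs fixed fractions ($\tfrac{1}{18}$, $\tfrac{1}{36}$, \dots) of each nonlinear contribution into the partial dissipation. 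At the $H^1$ level, with coefficients controlled in $H^3$, both routes close; yours is the more elementary, and in fact with your bounds every nonlinear term is dominated by $C\varepsilon\|(\tilde u,\tilde b)\|_{H^1}^2$, so Gr\"onwall alone suffices and the dissipation need not be used to absorb anything --- your closing remark slightly overstates the difficulty there. Second, and more substantively, your treatment of the forcing $-\nu\Lambda_3^{2\alpha}u^{\nu}$ --- Cauchy--Schwarz giving $C\nu\varepsilon\|\tilde u\|_{H^1}$, hence a source of size $C\nu^2$ after Young --- is what actually produces $\|(\tilde u,\tilde b)\|_{H^1}^2\le C(t)\nu^2$ and the advertised rate $O(\nu)$. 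The paper instead splits this term as $\tfrac{\nu}{2}\|\Lambda_3^{2\alpha}u^{\nu}\|^2+\tfrac{\nu}{2}\|\tilde u\|^2\le C\nu+\tfrac{\nu}{2}\|\tilde u\|^2$, which only yields $\|(\tilde u,\tilde b)\|_{H^1}^2\le Ct\nu$, i.e.\ the weaker rate $O(\nu^{1/2})$ for the $H^1$ norm; your version is the one consistent with the statement of Theorem \ref{theorem2}.
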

The detailed statements of Theorem \ref{theoreml} and \ref{theorem2} can be found in Sections~\ref{s3} and \ref{sec4}. We now state a summary of the major obstacles encountered and the main proof ideas.
The proof of Theorem \ref{theoreml} is divided into twofold. First step, we obtain the global stability by using standard energy method. And we structure the energy functional
\begin{align}\label{energyfunctional}
E(t)=&\underset{0\leq\tau\leq t}{sup}\|(u, b)\|_{H^3}^2 +\int_0^t \|(\nu_1^{\frac{1}{2}}\Lambda_1^{\alpha}, \nu_2^{\frac{1}{2}}\Lambda_2^{\alpha})u\|_{H^3}^2 d\tau+\sigma\nu_3\int_0^t \|\Lambda_3^{\alpha}u\|_{H^3}^2 d\tau \nonumber\\
&+\mu\int_0^t\|(\Lambda_2^{\beta}, \Lambda_3^{\beta})b_1\|_{H^3}^2+\|(\Lambda_1^{\beta}, \Lambda_3^{\beta})b_2\|_{H^3}^2+\|(\Lambda_1^{\beta}, \Lambda_2^{\beta})b_3\|_{H^3}^2 d\tau
\end{align}
to satisfy
\begin{align}\label{et}
E(t)\leq E(0)+CE(t)^{\frac{3}{2}}, \quad \forall t\geq 0.
\end{align}
The second step is to obtain the stability (\ref{neng}) by using the bootstrapping argument. Details of this approach refer to Section \ref{s3} of this paper.
However, the process of overcoming the stability problem is not extremely trivial. On the one hand, we first recall that the system(\ref{bhe}) has only partial fractional dissipation, namely, about the velocity equations involve only horizontal dissipation and the lack of dissipation in its own direction in the each magnetic component equation. Due to the lack of full dissipation, it becomes difficult for us to control nonlinear terms.
Thanks to the divergence free condition and the general anisotropic inequality of the Lemma \ref{lemmage} (iii, iv) help complete the bounds. On the other hand, the immediate index of the triple product norms of the general anisotropic inequality is invalid. We want to configure the desired index by using the interpolation inequality of Lemma \ref{lemmage} (i, ii) repeatedly. Eventually, we obtain the bound in the proof of Theorem \ref{theoreml}.

The rest of the paper is organized as follows. In Section~\ref{sec:pre}, we introduce some notations and give definitions of some function spaces. We mainly present some useful lemmas in this section. In Section ~\ref{s3}, we show the proof of theorem \ref{theoreml}: we prove the stability of the system (\ref{bhe}) via the energy method and bootstrapping argument and then we get the uniqueness part. Finally, the vanishing vertical kinematic viscosity limit for the system (\ref{bhev}) is discussed in Section ~\ref{sec4}.

\section{Preliminaries}\label{sec:pre}
Throughout this paper for the convenience of writing, C stands for some real positive constant. Denote $$\|f\|^2:=\|f\|^2_{L^2(\mathbb{R}^3)}, ~\|f\|_{L_t^2 H^s}:=\int_0^t\|f\|^2_{H^s(\mathbb{R}^3)} d\tau, ~s\geq0 $$
and
\begin{align*}
\|(\Lambda^{s_1}, \Lambda^{s_2}) (f,g)\|^2 &:= \|\Lambda^{s_1} (f,g)\|^2+\|\Lambda^{s_2} (f,g)\|^2\\&:= \|\Lambda^{s_1} f\|^2+ \|\Lambda^{s_1} g\|^2+\|\Lambda^{s_2} f\|^2+\|\Lambda^{s_2} g\|^2,  ~s_1,s_2\geq0,
\end{align*}
where  $f=f(t)$ and $ g=g(t)$ are two measurable functions on $\mathbb{R}^3$. %\in H^s\bigcap H^{s_1}\bigcap H^{s_2}$ with $s,
The inhomogeneous anisotropic Sobolev space and the homogeneous anisotropic space, denoted by $H^s(\mathbb{R}^3)$ and $\dot{H}^s(\mathbb{R}^3)$, are respectively defined as follows
\begin{align*}
H^s(\mathbb{R}^3)&:=\left\{f\in H^s(\mathbb{R}^3)\bigg|\int_{\mathbb{R}^3} \prod_{i=1}^3(1+|\xi_{i}|^2)^{s_i}|\hat{f}(\xi)|^2 d\xi<\infty \right\}\\
\dot{H}^s(\mathbb{R}^3)&:=\left\{f\in \dot{H}^s(\mathbb{R}^3)\bigg|\int_{\mathbb{R}^3} \prod_{i=1}^3|\xi|_{i}^{2s_i}|\hat{f}(\xi)|^2 d\xi<\infty \right\},
\end{align*}
where $ \xi=(\xi_1, \xi_2, \xi_3)$, $s=(s_1, s_2, s_3)$ and $|s|=s_1+s_2+s_3$.

Now to introduce some essential inequalities for the anisotropic Sobolev space.
\begin{lemma}(see \cite{Jiu2019chazhi}.)\label{lemmachazhi}
Let $2\leq q \leq\infty $ and $s>d(\frac{1}{2}-\frac{1}{q})$ such that the following estimates hold true
$$ \| f\|_{L^q(\mathbb{R}^d)}\leq C \| f\|_{L^2(\mathbb{R}^d)}^{1-\frac{d}{s}(\frac{1}{2}-\frac{1}{q})} \|\Lambda^s f\|_{L^2(\mathbb{R}^d)}^{\frac{d}{s}(\frac{1}{2}-\frac{1}{q})}, \quad \forall f\in H^s(\mathbb{R}^d),$$
where the constant $C$ depends on $d, p$ and $s$.
\end{lemma}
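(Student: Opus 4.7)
The plan is to establish this as a classical Gagliardo--Nirenberg type interpolation inequality using a Littlewood--Paley (frequency localization) decomposition together with Bernstein's inequality, and then optimize the cutoff frequency. Throughout, I will treat the critical scaling identity $\theta=\frac{d}{s}\bigl(\frac12-\frac1q\bigr)$, which is exactly the exponent forced by scaling: if $f_{\lambda}(x)=f(\lambda x)$, then $\|f_\lambda\|_{L^q}=\lambda^{-d/q}\|f\|_{L^q}$, $\|f_\lambda\|_{L^2}=\lambda^{-d/2}\|f\|_{L^2}$, and $\|\Lambda^s f_\lambda\|_{L^2}=\lambda^{s-d/2}\|\Lambda^s f\|_{L^2}$, which pins down $\theta$ uniquely. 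The hypothesis $s>d(\tfrac12-\tfrac1q)$ ensures $\theta\in[0,1)$ so that the interpolation makes sense.

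First I would fix a Littlewood--Paley partition of unity $1=\chi(\xi)+\sum_{j\ge 0}\varphi(2^{-j}\xi)$ with $\mathrm{supp}\,\chi\subset\{|\xi|\le 1\}$ and $\mathrm{supp}\,\varphi\subset\{\tfrac12\le|\xi|\le 2\}$, and for each $N>0$ write $f=P_{\le N}f+P_{>N}f$ where $P_{\le N}$ denotes the cutoff to frequencies $|\xi|\lesssim N$. For the low-frequency part, Bernstein's inequality gives
\begin{equation*}
\|P_{\le N}f\|_{L^q(\mathbb{R}^d)}\le C\,N^{d(\frac12-\frac1q)}\|P_{\le N}f\|_{L^2(\mathbb{R}^d)}\le C\,N^{d(\frac12-\frac1q)}\|f\|_{L^2(\mathbb{R}^d)}.
\end{equation*}
For the high-frequency part I would sum dyadically: for $j$ with $2^j\gtrsim N$, Bernstein gives $\|\Delta_j f\|_{L^q}\lesssim 2^{jd(\frac12-\frac1q)}\|\Delta_j f\|_{L^2}$, and inserting the factor $2^{-js}2^{js}$ together with $\|\Delta_j f\|_{L^2}\lesssim 2^{-js}\|\Lambda^s\Delta_j f\|_{L^2}$ yields
\begin{equation*}
\|P_{>N}f\|_{L^q}\le C\sum_{2^j\gtrsim N}2^{-j(s-d(\frac12-\frac1q))}\|\Lambda^s f\|_{L^2}\le C\,N^{-(s-d(\frac12-\frac1q))}\|\Lambda^s f\|_{L^2},
\end{equation*}
where the geometric series converges precisely because $s>d(\tfrac12-\tfrac1q)$.

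Combining the two bounds and optimizing by choosing $N$ so that the two right-hand sides balance, i.e.\ $N^{s}\sim \|\Lambda^s f\|_{L^2}/\|f\|_{L^2}$, produces
\begin{equation*}
\|f\|_{L^q(\mathbb{R}^d)}\le C\,\|f\|_{L^2(\mathbb{R}^d)}^{1-\frac{d}{s}(\frac12-\frac1q)}\|\Lambda^s f\|_{L^2(\mathbb{R}^d)}^{\frac{d}{s}(\frac12-\frac1q)},
\end{equation*}
which is exactly the claimed estimate. The case $q=\infty$ is handled identically (with $1/q=0$), using the endpoint Bernstein $\|\Delta_j f\|_{L^\infty}\lesssim 2^{jd/2}\|\Delta_j f\|_{L^2}$ and the same optimization; the trivial case $q=2$ reduces to $\|f\|_{L^2}\le\|f\|_{L^2}$.

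The only real obstacle is a technical one: making sure that the Littlewood--Paley pieces of $f\in H^s(\mathbb{R}^d)$ recombine to $f$ in $L^q$ rather than in a merely distributional sense, so that the triangle inequality used in the high-frequency estimate is legitimate. This is standard once one notes that under the hypothesis $s>d(\tfrac12-\tfrac1q)$ the dyadic sum $\sum_j \|\Delta_j f\|_{L^q}$ is absolutely convergent (by the same Bernstein computation above), so the series $\sum_j \Delta_j f$ converges in $L^q$ to $f$; if desired one could instead cite the standard Besov embedding $B^{s}_{2,2}\hookrightarrow B^{0}_{q,1}\hookrightarrow L^q$ under the same scaling condition to shortcut this step.
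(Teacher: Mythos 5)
Your proof is correct. The paper itself gives no proof of this lemma --- it is imported verbatim, with a citation, from \cite{Jiu2019chazhi} --- so there is no internal argument to compare against; your Littlewood--Paley decomposition with Bernstein's inequality on each dyadic block and optimization of the cutoff frequency $N$ (which should be taken as the dyadic value nearest the balance point, at the cost of an absolute constant) is the standard derivation of this fractional Gagliardo--Nirenberg inequality, the scaling computation correctly pins down the exponent $\frac{d}{s}\bigl(\frac12-\frac1q\bigr)$, and the convergence issue you flag at the end (recombination of the blocks in $L^q$, plus the trivial degenerate cases $f=0$ or $q=2$) is handled adequately.
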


\begin{lemma} (see \cite{Brezis2018GN}.) \label{lemmaGN}
Let the real number $s, s_1, s_2, \gamma \in(0, 1) $ and $1\leq q, q_1, q_2\leq\infty $ such that
$$s=\gamma s_1+(1-\gamma)s_2, \quad \frac{1}{q}=\frac{1}{q_1}+\frac{1}{q_1}.$$
Then
$$ \| f\|_{W^{s, q}(\Omega)}\leq C \| f\|_{W^{s_1, q_1}(\Omega)}^{\gamma} \| f\|_{W^{s_2, q_2}(\Omega)}^{1-\gamma}, \quad \forall f\in W^{s_1, q_1}\cap W^{s_2, q_2}$$
 holds if and only if the integer $s_2\geq 1, q_2=1$ and $s_2-s_1\leq 1-\frac{1}{q_1}$ is not true.
Where $C>0$ is a constat dependent on $\Omega, \beta, s_1, s_2, q_1, q_2$ and the standard domain $\Omega$ is either $\mathbb{R}^d$ or a half space or a Lipschitz bounded domain in $\mathbb{R}^d$.
\end{lemma}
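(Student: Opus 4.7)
The plan is to treat this as a Brezis--Mironescu type fractional Gagliardo--Nirenberg inequality and exploit the equivalence between Slobodeckij norms $W^{s,q}$ and Besov norms $B^{s}_{q,q}$ (for non-integer $s$), so that interpolation can be carried out block-by-block in a Littlewood--Paley decomposition. I would first reduce to $\Omega=\mathbb{R}^{d}$: for a Lipschitz bounded domain one invokes a Stein/Rychkov-type universal extension $E\colon W^{\sigma,p}(\Omega)\to W^{\sigma,p}(\mathbb{R}^{d})$ that is bounded simultaneously for all the exponents in play, and for a half-space a direct reflection works. The constant $C$ picks up its dependence on $\Omega$ at this step.

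Next, fix a Littlewood--Paley partition $f=\sum_{j\geq -1}\Delta_{j}f$. The key pointwise identity is the elementary interpolation of $L^{q}$ norms:
\begin{equation*}
\|\Delta_{j}f\|_{L^{q}}\leq \|\Delta_{j}f\|_{L^{q_{1}}}^{\gamma}\,\|\Delta_{j}f\|_{L^{q_{2}}}^{1-\gamma},
\end{equation*}
which is compatible with the scaling relation $\frac{1}{q}=\frac{\gamma}{q_{1}}+\frac{1-\gamma}{q_{2}}$ (as intended by the statement). Multiplying by $2^{js}=2^{j\gamma s_{1}}\cdot 2^{j(1-\gamma)s_{2}}$, using $s=\gamma s_{1}+(1-\gamma)s_{2}$, and summing over $j$ with Hölder in $\ell^{q}_{j}$ with exponents $\gamma^{-1}$ and $(1-\gamma)^{-1}$ delivers
\begin{equation*}
\|f\|_{B^{s}_{q,q}}\leq C\,\|f\|_{B^{s_{1}}_{q_{1},q_{1}}}^{\gamma}\,\|f\|_{B^{s_{2}}_{q_{2},q_{2}}}^{1-\gamma}.
\end{equation*}
Translating this back to Sobolev norms via the equivalence $W^{\sigma,p}\simeq B^{\sigma}_{p,p}$ for non-integer $\sigma$, and using the standard embedding of integer-order Sobolev spaces into the corresponding Besov scale, yields the desired inequality. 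This handles the sufficient direction whenever the forbidden configuration does not occur.

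The main obstacle is the necessary direction: the inequality fails precisely when $s_{2}$ is a positive integer, $q_{2}=1$, and $s_{2}-s_{1}\leq 1-\tfrac{1}{q_{1}}$. In that regime the embedding $W^{s_{1},q_{1}}\hookrightarrow W^{s_{2}-1,1}$ (a classical Sobolev embedding governed exactly by the condition $s_{2}-s_{1}\leq 1-\tfrac{1}{q_{1}}$) means $W^{s_{2},1}$ is, up to one derivative, already ``covered'' by $W^{s_{1},q_{1}}$, so the purported interpolation is strictly stronger than a Sobolev embedding. Following Brezis--Mironescu, I would construct a counterexample by a concentrating/translating family: either a single bump $f_{n}(x)=n^{a}\varphi(nx)$ for a suitably chosen exponent $a$ matched to the scaling of the three norms, or a widely separated sum $f_{N}=\sum_{k=1}^{N}\varphi(\cdot -x_{k})$ with $|x_{k}-x_{\ell}|\to \infty$. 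A careful accounting of how each norm scales in $n$ (respectively $N$) produces a divergent ratio $\|f_{n}\|_{W^{s,q}}/(\|f_{n}\|_{W^{s_{1},q_{1}}}^{\gamma}\|f_{n}\|_{W^{s_{2},q_{2}}}^{1-\gamma})$, establishing the ``only if'' part.

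The hardest piece is calibrating this counterexample so that all admissible parameters in the exceptional regime are ruled out simultaneously; the rest is essentially bookkeeping. In particular, the dyadic/Besov argument is robust, the extension step is standard, and the only conceptually delicate point is identifying and exploiting the competing embedding in the $q_{2}=1$, integer-$s_{2}$ case to produce a sharp obstruction.
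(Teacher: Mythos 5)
First, note that the paper does not prove this lemma at all: it is quoted (with typos) from Brezis--Mironescu \cite{Brezis2018GN} and used as a black box, so there is no internal proof to compare your argument against. Judged on its own terms, your Littlewood--Paley sketch of the sufficiency direction is the standard route and is essentially complete \emph{in the parameter range actually stated in the lemma}: since $s,s_1,s_2\in(0,1)$ are all non-integer, each of the three spaces $W^{\sigma,p}$ coincides with $B^{\sigma}_{p,p}$ for every $1\le p\le\infty$, the block-wise interpolation closes, and the exceptional case is vacuous because $s_2<1$ can never be an integer $\geq 1$. You also correctly read the displayed relation $\frac1q=\frac1{q_1}+\frac1{q_1}$ as a typo for $\frac1q=\frac{\gamma}{q_1}+\frac{1-\gamma}{q_2}$, which is what the source actually requires. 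One small slip: the H\"older exponents in the $j$-sum should be $q_1/(\gamma q)$ and $q_2/((1-\gamma)q)$ (whose reciprocals sum to $1$ by the relation on $q$), not $\gamma^{-1}$ and $(1-\gamma)^{-1}$; the latter choice produces $B^{s_i}_{q_i,q}$ on the right, and the embedding to $B^{s_i}_{q_i,q_i}$ goes the wrong way for whichever $q_i$ exceeds $q$.

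Two genuine gaps remain if you intend the full statement including the ``if and only if''. First, in the hard regime of the actual theorem ($s_2$ a positive integer, $q_2=1$) the identification $W^{s_2,1}\simeq B^{s_2}_{1,1}$ fails: one only has $B^{m}_{1,1}\hookrightarrow W^{m,1}\hookrightarrow B^{m}_{1,\infty}$, so ``the standard embedding of integer-order Sobolev spaces into the corresponding Besov scale'' does not give $\|f\|_{B^{s_2}_{1,1}}\lesssim\|f\|_{W^{s_2,1}}$, which is what your scheme needs; this is exactly why Brezis--Mironescu require a separate and much more delicate argument there, and why the exceptional case exists at all. Second, the necessity direction is only a plan: you name two natural families of counterexamples but carry out no scaling computation, and the construction in \cite{Brezis2018GN} is a carefully calibrated lacunary sum rather than a single rescaled bump. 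Neither gap affects the lemma as it is used in this paper, since the authors only ever invoke it with all smoothness indices in $(0,1)$, but you should either restrict your claim to that range or supply these two missing steps.
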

Then we introduce the major tool derived from the above lemmas and Minkowski's inequality.

\begin{lemma} \label{lemmage}
For $s, p, q\in (\frac{1}{2}, 1]$, suppose that $f, g, h, \Lambda_i^{s}f, \Lambda_j^{p}g$ and $ \Lambda_k^{q}h $ are all in $L^2(\mathbb{R}^3)$. Then
%\begin{enumerate}[(i)]
%\rm\item $\|\Lambda_i^{s} f\|\leq C \|f\|^{1-s} \|\Lambda_i f\|^{s}\leq C \|f\|_{H^1}, $
%
%\rm\item $\|\Lambda_i f\|\leq C \|\Lambda_i^{s}f\|^{s} \|\Lambda_i^{s+1}f\|^{1-s}\leq C \|\Lambda_i^{s}f\|_{H^1} $,
%
%\rm\item $\int |fgh|dx\leq C \|f\|^{1-\frac{1}{2s}} \|\Lambda_i^{s}f\|^{\frac{1}{2s}} \|g\|^{1-\frac{1}{2p}} \|\Lambda_j^{p}g\|^{\frac{1}{2p}} \|h\|^{\frac{1}{2}} \|\Lambda_k h\|^{\frac{1}{2}}$,
%
%\rm\item $\int |fgh|dx\leq C \|f\|^{1-\frac{1}{2s}} \|\Lambda_i^{s}f\|^{\frac{1}{2s}} \|g\|^{1-\frac{1}{2p}} \|\Lambda_j^{p}g\|^{\frac{1}{2p}} \|h\|^{1-\frac{1}{2q}} \|\Lambda_k^{q}h\|^{\frac{1}{2q}}$,
%\end{enumerate}
where $i, j, k\in \{1, 2, 3\}$ satisfy $\varepsilon_{ijk}\neq0$ and $\varepsilon_{ijk}$ denotes the Ricci symbol.
\end{lemma}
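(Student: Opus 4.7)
The plan is to prove these anisotropic triple-product bounds by combining a one-dimensional fractional Sobolev embedding with Hölder's and Minkowski's inequalities, exploiting the directional structure that the hypothesis $\varepsilon_{ijk}\neq 0$ encodes. Because the Ricci symbol is nonzero, $\{i,j,k\}$ is a permutation of $\{1,2,3\}$, so by the symmetry of the roles of the three coordinates I may assume $(i,j,k)=(1,2,3)$. This is the crucial structural point: each of the three factors $f$, $g$, $h$ gets assigned a distinct ``preferred'' coordinate direction in which its own fractional derivative will end up being used, while the remaining two directions are handled by embedding.

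The first ingredient is the one-dimensional fractional Sobolev embedding $H^s(\mathbb{R})\hookrightarrow L^\infty(\mathbb{R})$ for $s>\tfrac12$, which is exactly Lemma~\ref{lemmachazhi} with $d=1$ and $q=\infty$:
\begin{equation*}
\|\varphi\|_{L^\infty_{y}}\leq C\|\varphi\|_{L^2_{y}}^{1-\frac{1}{2s}}\|\Lambda^{s}\varphi\|_{L^2_{y}}^{\frac{1}{2s}}.
\end{equation*}
This is precisely where the hypothesis $s,p,q\in(\tfrac12,1]$ enters. Applied with $y=x_1$ (freezing $x_2,x_3$) to $f$, with $y=x_2$ to $g$, and with $y=x_3$ to $h$, it controls each factor in $L^\infty$ along its assigned direction by mixed-norm quantities involving $\Lambda_1^{s}f$, $\Lambda_2^{p}g$, $\Lambda_3^{q}h$, respectively.

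The second step is to bound the triple product (either $\int|fgh|\,dx$ or $\|fgh\|$ according to whichever form item $(\mathrm{i})$--$(\mathrm{iv})$ asserts) by iterated Hölder in the three coordinates, placing each factor in $L^\infty$ in its preferred direction and in $L^2$ in the other two. One then inserts the one-dimensional embedding above and uses Minkowski's inequality repeatedly to exchange the order of mixed norms of the form $L^2_{x_j}L^\infty_{x_i}\rightarrow L^\infty_{x_i}L^2_{x_j}$ so that every surviving norm is a full $L^2(\mathbb{R}^3)$-norm of one of $f,g,h$ or of one of the directional derivatives $\Lambda_1^{s}f, \Lambda_2^{p}g, \Lambda_3^{q}h$. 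If the exponents produced by this direct scheme do not exactly match those in parts $(\mathrm{i})$--$(\mathrm{iv})$, one final fractional Gagliardo--Nirenberg interpolation via Lemma~\ref{lemmaGN} reconciles them.

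The main obstacle is the bookkeeping in the Minkowski step. Minkowski's inequality $\|\,\cdot\,\|_{L^{p}_{x}L^{q}_{y}}\le \|\,\cdot\,\|_{L^{q}_{y}L^{p}_{x}}$ is valid only when $p\le q$, so one must arrange for the $L^\infty$ norms always to sit \emph{inside} the $L^2$ norms before any order swap. This forces the unique coherent pairing of factors to directions described above, and it is here that the distinctness of $i,j,k$ (encoded by $\varepsilon_{ijk}\neq 0$) is essential: if two of them coincided, two factors would compete for the same preferred coordinate and no consistent ordering of Minkowski exchanges would be available. Once the correct assignment is made the rest is routine manipulation of scalar exponents and power-mean factors.
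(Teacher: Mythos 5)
Your scheme --- the one-dimensional case of Lemma~\ref{lemmachazhi} (i.e.\ $H^{s}(\mathbb{R})\hookrightarrow L^{\infty}(\mathbb{R})$ for $s>\frac{1}{2}$, applied to each factor in its assigned coordinate), iterated H\"older in the three directions, and Minkowski exchanges to restore full $L^{2}(\mathbb{R}^{3})$ norms --- is precisely the derivation the paper itself indicates: the lemma is stated without a written proof, introduced only as ``derived from the above lemmas and Minkowski's inequality,'' and the exponents $1-\frac{1}{2s}$ and $\frac{1}{2s}$ that your method produces are exactly those used in every application in Section~\ref{s3}. The only caveat is that items (i)--(iv) are missing from the lemma as printed, so the final exponent bookkeeping cannot be checked against an explicit target; within that limitation your proposal is correct and coincides with the intended argument.
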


\begin{lemma} \label{lemmap}
Assume that the right-hand sides of there inequalities are all bounded with $m, n\in \mathbb{N}$ and $s\in (0, 1)$. Then
\begin{align*}
\|\partial_{i}^{m+1}\partial_{j}^{n}\Lambda_j^{s} f\|&\leq \| \partial_j^{n+1}\partial_i^{m}\Lambda_i^{s} f\|^{s} \| \partial_j^{n}\partial_i^{m+1}\Lambda_i^{s} f\|^{1-s}\leq \| \Lambda_i^{s} f\|_{H^{m+n+1}}.
\end{align*}
\end{lemma}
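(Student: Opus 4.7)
The plan is to pass to the Fourier side and reduce everything to a Hölder inequality with conjugate exponents $1/s$ and $1/(1-s)$. By Plancherel's identity, the square of the left-hand side is
\begin{equation*}
\|\partial_{i}^{m+1}\partial_{j}^{n}\Lambda_j^{s} f\|^2 \;=\; \int_{\mathbb{R}^3} |\xi_i|^{2(m+1)} |\xi_j|^{2n+2s}\, |\hat f(\xi)|^2\, d\xi,
\end{equation*}
while the two factors on the right correspond, respectively, to the weights $a_1(\xi)^2 := |\xi_j|^{2(n+1)} |\xi_i|^{2m+2s}$ and $a_2(\xi)^2 := |\xi_j|^{2n} |\xi_i|^{2(m+1)+2s}$ times $|\hat f(\xi)|^2$.

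The key algebraic check is that the $\xi$-weights split cleanly along the Hölder pairing. First I would verify the exponent bookkeeping: for the $|\xi_j|$ factor, $2s(n+1) + 2(1-s)n = 2n + 2s$; for the $|\xi_i|$ factor, $s(2m+2s) + (1-s)(2m+2+2s) = 2(m+1)$. Hence
\begin{equation*}
|\xi_i|^{2(m+1)} |\xi_j|^{2n+2s} |\hat f(\xi)|^2 \;=\; \bigl(a_1(\xi)^2 |\hat f(\xi)|^2\bigr)^{s} \bigl(a_2(\xi)^2 |\hat f(\xi)|^2\bigr)^{1-s},
\end{equation*}
where I have used $s + (1-s) = 1$ to consolidate the $|\hat f|^2$ powers. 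Applying Hölder's inequality with exponents $1/s$ and $1/(1-s)$ to the integral then yields
\begin{equation*}
\|\partial_{i}^{m+1}\partial_{j}^{n}\Lambda_j^{s} f\|^2 \;\le\; \bigl(\|\partial_{j}^{n+1}\partial_{i}^{m}\Lambda_i^{s} f\|^{2}\bigr)^{s} \bigl(\|\partial_{j}^{n}\partial_{i}^{m+1}\Lambda_i^{s} f\|^{2}\bigr)^{1-s},
\end{equation*}
and taking square roots gives the first asserted inequality.

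For the second inequality I would simply note that both factors are $L^2$-norms of order-$(m+n+1)$ pure derivatives of $\Lambda_i^{s} f$, so each is bounded by $\|\Lambda_i^{s} f\|_{H^{m+n+1}}$; raising to the powers $s$ and $1-s$ and multiplying, and again using $s + (1-s) = 1$, produces the bound $\|\Lambda_i^{s} f\|_{H^{m+n+1}}$. The only place where care is needed is the exponent accounting in the Fourier-side identity above, which is routine but easy to slip on because the weight $\Lambda_j^{s}$ on the left is redistributed onto the $\xi_i$-variable on the right; beyond that the argument is a direct application of Hölder and Plancherel with no analytic subtlety.
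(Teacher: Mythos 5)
Your proof is correct and follows essentially the same route as the paper's: Plancherel, a multiplicative splitting of the Fourier weight into pieces raised to the powers $s$ and $1-s$, and H\"older's inequality with exponents $1/s$ and $1/(1-s)$, followed by the trivial bound of each factor by $\|\Lambda_i^{s}f\|_{H^{m+n+1}}$. The exponent bookkeeping you carry out checks out, so nothing further is needed.
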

\begin{proof}
By using Plancherel theorem, we have
\begin{align*}
 \|\partial_{i}^{m+1}\partial_{j}^{n}\Lambda_j^{s} f\|&= \|\xi_{i}^{m+1}\xi_{j}^{n} \xi_j^{s} \hat{f}\|\\
 &= \|\xi_i^{ms} \xi_i^{s^2} \xi_j^{(n+1)s} \hat{f}^{s} ~\cdot~ \xi_i^{m(1-s)} \xi_i^{(1+s)(1-s)} \xi_j^{n(1-s)} \hat{f}^{1-s}\|\\
 &\leq \|\xi_i^{m}\xi_j^{n+1}\xi_i^{s}\hat{f}\|^{s} ~\|\xi_i^{(m+1)} \xi_j^{n} \xi_i^{s} \hat{f}\|^{1-s}\\
 &= \|\partial_i^{m} \partial_j^{n+1}\Lambda_i^{s} f\|^{s} ~\| \partial_i^{m+1} \partial_j^{n}\Lambda_i^{s} f\|^{1-s}\leq \| \Lambda_i^{s} f\|_{H^{m+n+1}}.
\end{align*}
\end{proof}

\section{Proof of Theorem \ref{theoreml}.} \label{s3}
This section discloses the stability of the system (\ref{bhe}), as shown in Theorem \ref{theoreml}.
The important technique here is bootstrap argument, which first establishes a priori estimate and then applies this method to obtain inequality (\ref{et}).

\begin{proof}
First, we make $L^2$ estimate.
Taking the $L^2$ inner product of the equations (\ref{bhe}) with $(u, b)$, we obtain
\begin{align*}
\frac{1}{2}\frac{d}{dt}\|(u, b)\|^2&+\|(\nu_1^{\frac{1}{2}}\Lambda_1^{\alpha}, \nu_2^{\frac{1}{2}}\Lambda_2^{\alpha})u\|^2+\sigma\nu_3 \|\Lambda_3^{\alpha}u\|^2\\
&+\mu\left(\|(\Lambda_2^{\beta}, \Lambda_3^{\beta})b_1\|^2+\|(\Lambda_1^{\beta}, \Lambda_3^{\beta})b_2\|^2+\|(\Lambda_1^{\beta}, \Lambda_2^{\beta})b_3\|^2\right)=0.
\end{align*}
Integrating in time one has
\begin{multline*}
\|(u, b)\|^2+\int_0^t \|(\nu_1^{\frac{1}{2}}\Lambda_1^{\alpha}, \nu_2^{\frac{1}{2}}\Lambda_2^{\alpha})u\|^2 d\tau+\sigma\nu_3\int_0^t \|\Lambda_3^{\alpha}u\|^2d \tau\\
+\mu\int_0^t\|(\Lambda_2^{\beta}, \Lambda_3^{\beta})b_1\|^2+\|(\Lambda_1^{\beta}, \Lambda_3^{\beta})b_2\|^2+\|(\Lambda_1^{\beta}, \Lambda_2^{\beta})b_3\|^2 d \tau=0.
\end{multline*}
Applying $\partial_i^3$ to (\ref{bhe}) and taking $L^2$ inner product with $(\partial_i^3 u, \partial_i^3 b)$ and then taking summation for $i$ from $1$ to $3$, we obtain
\begin{align*}
\frac{1}{2}\frac{d}{d t}&\|(u, b)\|_{H^3}^2+\|(\nu_1^{\frac{1}{2}}\Lambda_1^{\alpha}, \nu_2^{\frac{1}{2}}\Lambda_2^{\alpha})u\|_{H^3}^2+\sigma\nu_3 \|\Lambda_3^{\alpha}u\|_{H^3}^2\\ &~~+\mu\left(\|(\Lambda_2^{\beta}, \Lambda_3^{\beta})b_1\|_{H^3}^2+\|(\Lambda_1^{\beta}, \Lambda_3^{\beta})b_2\|_{H^3}^2+\|(\Lambda_1^{\beta}, \Lambda_2^{\beta})b_3\|_{H^3}^2\right)\\
&\leq-\sum_{i=1}^3 \int \partial_i^3(u\cdot\nabla u)\cdot\partial_i^3 u dx-\sum_{i=1}^3 \int \partial_i^3(u\cdot\nabla b)\cdot\partial_i^3 b dx\\
&~~+\sum_{i=1}^3 \int \partial_i^3(b\cdot\nabla b)\cdot\partial_i^3 u dx+\sum_{i=1}^3 \int \partial_i^3(b\cdot\nabla u)\cdot\partial_i^3 b dx\\
&:=G_1+H_1+G_2+H_2.
\end{align*}
We notice a fact that
\begin{align*}
\sum_{i=1}^3 \int \partial_i^3 \partial_3 b\cdot \partial_i^3 u dx+\sum_{i=1}^3 \int \partial_i^3 \partial_3 u\cdot\partial_i^3 b dx =0.
\end{align*}
\noindent\underline{\textbf {Estimate of $G_1$.}}
For $i\in\{1, 2, 3\}$, the term $G_1$ can be expanded under the form
\begin{align*}
G_1&=-\sum_{i=1}^3 \int \partial_i^3 u\cdot\nabla u\cdot \partial_i^3 u dx-3\sum_{i=1}^2 \int \partial_{i}^3 u\cdot\nabla \partial_i u\cdot \partial_i^3 u dx
\\&~~~-3\sum_{i=1}^3 \int \partial_i u\cdot\nabla \partial_{i}^2 u\cdot \partial_i^3 u dx-\sum_{i=1}^3 \int u\cdot\nabla \partial_i^3 u\cdot\partial_i^3 u dx\\
&:=I_1+3I_2+3I_3+I_4.
\end{align*}
Here we have the fact that $div~u=0$ to obtain the last term $I_4=0$.

\noindent\textbf {Estimate of $I_1$.} Taking full advantage of anisotropic dissipation, we decompose it into three terms also by using the divergence free condition $div~u=0$ as follows
\begin{align*}
I_{1}=&-\sum_{i=1}^2 \int \partial_i^3 u\cdot\nabla u\cdot \partial_i^3 u dx -\sum_{j=1}^2 \int \partial_3^3 u_{j}\partial_{j} u\cdot \partial_3^3 u dx-\int \partial_3^3 u_{3}\partial_{3} u\cdot \partial_3^3 u dx :=I_{11}+I_{12}+I_{13}.
\end{align*}
Using (ii, iii) of Lemma \ref{lemmage} and Young inequality, we get
\begin{align*}
I_{11}\leq&C\sum_{i=1}^2\|\partial_{i}^3 u\|^{2-\frac{1}{\alpha}}\| \Lambda_1^{\alpha}\partial_{i}^3 u\|^\frac{1}{2\alpha}\| \Lambda_2^{\alpha}\partial_{i}^3 u\|^\frac{1}{2\alpha}\|\nabla u\|^\frac{1}{2}\|\partial_3 \nabla u\|^\frac{1}{2}\\
\leq&C\| u\|_{H^3}\sum_{i=1}^2\|\Lambda_i^{\alpha}\partial_{i}^3 u\|^{(2-\frac{1}{\alpha})\alpha}\|\Lambda_i^{\alpha+1}\partial_{i}^3 u\|^{(2-\frac{1}{\alpha})(1-\alpha)}\| \Lambda_1^{\alpha}\partial_{i}^3 u\|^\frac{1}{2\alpha}\| \Lambda_2^{\alpha}\partial_{i}^3 u\|^\frac{1}{2\alpha}\\
\leq&C\| u\|_{H^3}\|(\Lambda_1^\alpha, \Lambda_{2}^\alpha) u\|_{H^3}^2.
\end{align*}
The term $I_{12}$ can be controlled by utilizing Lemma \ref{lemmap}, Lemma \ref{lemmage} (i, ii, iv) and Young inequality
\begin{align*}
I_{12}\leq&C\sum_{j=1}^2\|\partial_{3}^3 u\|^{1-\frac{1}{2\alpha}}\| \Lambda_1^{\alpha}\partial_{3}^3 u\|^\frac{1}{2\alpha}\| \|\partial_{3}^3 u_{j}\|^{1-\frac{1}{2\alpha}} \|\Lambda_2^{\alpha}\partial_{3}^3 u_{j}\|^\frac{1}{2\alpha}\|\partial_{j} u\|^{1-\frac{1}{2\alpha}}s \|\partial_{j} \Lambda_3^{\alpha} u\|^\frac{1}{2\alpha}\\
\leq&C\sum_{j=1}^2\| u\|_{H^3}^{2-\frac{1}{\alpha}}\| \Lambda_1^{\alpha} u\|_{H^3}^\frac{1}{2\alpha}\| \Lambda_2^{\alpha} u\|_{H^3}^\frac{1}{2\alpha}\|\partial_{j} u\|^{1-\frac{1}{2\alpha}}\|\Lambda_{j}^{\alpha} u\|_{H^1}^{\frac{1}{2\alpha}}\\
\leq&C\sum_{i=1}^2\| u\|_{H^3}^{2-\frac{1}{\alpha}}\| \Lambda_1^{\alpha} u\|_{H^3}^\frac{1}{2\alpha}\| \Lambda_2^{\alpha} u\|_{H^3}^\frac{1}{2\alpha}\|\partial_{j} u\|^{(1-\frac{1}{2\alpha})(\frac{1}{\alpha}-1)}\|\partial_{j} u\|^{(1-\frac{1}{2\alpha})(2-\frac{1}{\alpha})}\\
&\times\|\Lambda_{j}^{\alpha} u\|_{H^1}^{\frac{1}{2\alpha}(\frac{1}{\alpha}-1)}\|\Lambda_{j}^{\alpha} u\|_{H^1}^{\frac{1}{2\alpha}(2-\frac{1}{\alpha})}\\
\leq&C\sum_{j=1}^2\| u\|_{H^3}^{2-\frac{1}{\alpha}}\| \Lambda_1^{\alpha} u\|_{H^3}^\frac{1}{2\alpha}\| \Lambda_2^{\alpha} u\|_{H^3}^\frac{1}{2\alpha}\|\partial_{j} u\|^{(1-\frac{1}{2\alpha})(\frac{1}{\alpha}-1)}\|\Lambda_{j}^{\alpha} u\|^{(1-\frac{1}{2\alpha})(2-\frac{1}{\alpha})\alpha} \\
&\times\|\Lambda_{j}^{\alpha+1} u\|^{(1-\frac{1}{2\alpha})(2-\frac{1}{\alpha})(1-\alpha)}\|\Lambda_{j} u\|_{H^1}^{\frac{1}{2\alpha}(\frac{1}{\alpha}-1)\alpha}\| u\|_{H^1}^{\frac{1}{2\alpha}(\frac{1}{\alpha}-1)(1-\alpha)}\|\Lambda_{j}^{\alpha} u\|_{H^1}^{\frac{1}{2\alpha}(2-\frac{1}{\alpha})}\\
\leq&C\sum_{j=1}^2\|u\|_{H^3}\| \Lambda_1^{\alpha} u\|_{H^3}^\frac{1}{2\alpha}\| \Lambda_2^{\alpha} u\|_{H^3}^\frac{1}{2\alpha}\|\Lambda_{j}^{\alpha} u\|_{H^3}^{2-\frac{1}{\alpha}}\leq C\| u\|_{H^3}\|(\Lambda_1^\alpha, \Lambda_{2}^\alpha) u\|_{H^3}^2.
\end{align*}
Thanks to the divergence-free condition $div~u=0$, $I_{13}$ read as follows and then we use again (i), (ii) and (iv) of Lemma \ref{lemmage} and Young inequality to obtain that
\begin{align*}
I_{13}=&-\int \partial_3^3 u_{3}\partial_{3} u\cdot \partial_3^3 u dx=\sum_{i=1}^2 \int \partial_3^2\partial_{i} u_{i}\partial_3 u \partial_3^3 u dx\\
\leq&C\sum_{i=1}^2\|\partial_{3}^3 u\|^{1-\frac{1}{2\alpha}}\| \Lambda_1^{\alpha}\partial_{3}^3 u\|^\frac{1}{2\alpha}\|\partial_{3} u\|^{1-\frac{1}{2\alpha}}\|\partial_{3} \Lambda_2^{\alpha} u\|^\frac{1}{2\alpha}\| \|\partial_3^2\partial_{i} u_{i}\|^{1-\frac{1}{2\alpha}} \|\Lambda_3^{\alpha}\partial_3^2\partial_{i} u_{i}\|^\frac{1}{2\alpha}\\
\leq&C\sum_{i=1}^2\|\partial_{3}^3 u\|^{1-\frac{1}{2\alpha}}\| \Lambda_1^{\alpha}\partial_{3}^3 u\|^\frac{1}{2\alpha}\|\partial_{3} u\|^{1-\frac{1}{2\alpha}}\|\partial_{3} \Lambda_2^{\alpha} u\|^{\frac{1}{2\alpha}(\frac{1}{\alpha}-1)}\|\partial_{3} \Lambda_2^{\alpha} u\|^{\frac{1}{2\alpha}(2-\frac{1}{\alpha})} \\&\times\|\partial_3^2\partial_{i} u_{i}\|^{(1-\frac{1}{2\alpha})(\frac{1}{\alpha}-1)} \|\partial_3^2\partial_{i} u_{i}\|^{(1-\frac{1}{2\alpha})(2-\frac{1}{\alpha})} \|\Lambda_{i}^{\alpha}u_{i}\|_{H^3}^\frac{1}{2\alpha}\\
\leq&C\sum_{i=1}^2\| u\|_{H^3}^{2-\frac{1}{\alpha}+(1-\frac{1}{2\alpha})(\frac{1}{\alpha}-1)}\| \Lambda_1^{\alpha} u\|_{H^3}^\frac{1}{2\alpha}\|\partial_{3} \Lambda_2 u\|^{\frac{1}{2\alpha}(\frac{1}{\alpha}-1)\alpha}\|\partial_{3} u\|^{\frac{1}{2\alpha}(\frac{1}{\alpha}-1)(1-\alpha)} \|\Lambda_2^{\alpha} u\|_{H^3}^{\frac{1}{2\alpha}(2-\frac{1}{\alpha})} \\&\times\|\Lambda_{i}^{\alpha}\partial_{3}^2 u_{i}\|^{(1-\frac{1}{2\alpha})(2-\frac{1}{\alpha})\alpha} \|\Lambda_{i}^{\alpha}\partial_{3}^2 u_{i}\|^{(1-\frac{1}{2\alpha})(2-\frac{1}{\alpha})(1-\alpha)} \|\Lambda_{i}^{\alpha}u_{i}\|_{H^3}^\frac{1}{2\alpha}\\
\leq&C\sum_{i=1}^2\|u\|_{H^3}\| \Lambda_1^{\alpha} u\|_{H^3}^\frac{1}{2\alpha}\| \Lambda_2^{\alpha} u\|_{H^3}^{\frac{1}{2\alpha}+\frac{1}{2\alpha}(2-\frac{1}{\alpha})}\|\Lambda_{i}^{\alpha} u\|_{H^3}^{(1-\frac{1}{2\alpha})(2-\frac{1}{\alpha})}
\leq C\| u\|_{H^3}\|(\Lambda_1^\alpha, \Lambda_{2}^\alpha) u\|_{H^3}^2.
\end{align*}

\noindent\textbf {Estimate of $I_2$.} We do the following decomposition
\begin{align*}
I_{2}&=-\sum_{i=1}^2 \int \partial_i^2 u\cdot\nabla\partial_i u\cdot \partial_{i}^3 u dx -\sum_{j=1}^2 \int\partial_3^2 u_{j}\partial_{3j} u\cdot \partial_{3}^3 u dx+\sum_{i=1}^2\int \partial_i u_{i}\partial_{3} u\cdot \partial_{3}^3 u dx \\&:=I_{21}+I_{22}+I_{23}.
\end{align*}
The first term $I_{21}$ can be controlled similarly as $I_{11}$,
\begin{align*}
I_{21}\leq&C\sum_{i=1}^2\|\partial_{i}^3 u\|^{1-\frac{1}{2\alpha}}\| \Lambda_1^{\alpha}\partial_{i}^3 u\|^\frac{1}{2\alpha}\|\nabla\partial_{i} u\|^{1-\frac{1}{2\alpha}}\| \Lambda_2^{\alpha}\nabla\partial_{i} u\|^\frac{1}{2\alpha}\|\partial_{i}^2 u\|^\frac{1}{2}\|\partial_{3}\partial_{i}^2  u\|^\frac{1}{2}\\
\leq&C\| u\|_{H^3}\|(\Lambda_1^\alpha, \Lambda_{2}^\alpha) u\|_{H^3}^2.
\end{align*}
For the term $I_{22}$, from Lemma \ref{lemmap}, Lemma \ref{lemmage} (i, ii, iv) and Young inequality, we find
\begin{align*}
I_{22}\leq&C\sum_{j=1}^2\|\partial_{3}^3 u\|^{1-\frac{1}{2\alpha}}\| \Lambda_1^{\alpha}\partial_{3}^3 u\|^\frac{1}{2\alpha} \|\partial_{3}^2 u_{j}\|^{1-\frac{1}{2\alpha}} \|\Lambda_2^{\alpha}\partial_{3}^2 u_{j}\|^\frac{1}{2\alpha}\|\partial_{3j} u\|^{1-\frac{1}{2\alpha}} \|\Lambda_3^{\alpha}\partial_{3j} u\|^\frac{1}{2\alpha}\\
\leq&C\sum_{j=1}^2\| u\|_{H^3}^{2-\frac{1}{\alpha}}\| \Lambda_1^{\alpha} u\|_{H^3}^\frac{1}{2\alpha}\|\Lambda_2^{\alpha}\partial_{3}^2 u_{j}\|^{\frac{1}{2\alpha}(\frac{1}{\alpha}-1)}\|\Lambda_2^{\alpha}\partial_{3}^2 u_{j}\|^{\frac{1}{2\alpha}(2-\frac{1}{\alpha})} \|\partial_{3j} u\|^{(1-\frac{1}{2\alpha})(\frac{1}{\alpha}-1)}
\\&\times\|\partial_{3j} u\|^{(1-\frac{1}{2\alpha})(2-\frac{1}{\alpha})}\|\Lambda_j^{\alpha} u\|_{H^3}^\frac{1}{2\alpha}\\
\leq&C\sum_{j=1}^2\| u\|_{H^3}^{2-\frac{1}{\alpha}}\| \Lambda_1^{\alpha} u\|_{H^3}^\frac{1}{2\alpha}\|\Lambda_2\partial_{3}^2 u_{j}\|^{\frac{1}{2\alpha}(\frac{1}{\alpha}-1)\alpha}\|\partial_{3}^2 u_{j}\|^{\frac{1}{2\alpha}(\frac{1}{\alpha}-1)(1-\alpha)} \|\Lambda_2^{\alpha}\partial_{3}^2 u_{j}\|^{\frac{1}{2\alpha}(2-\frac{1}{\alpha})}
\\&\times \|\partial_{3j} u\|^{(1-\frac{1}{2\alpha})(\frac{1}{\alpha}-1)} \|\Lambda_{j}^{\alpha}\partial_{3} u\|^{(1-\frac{1}{2\alpha})(2-\frac{1}{\alpha})\alpha}\|\Lambda_{j}^{\alpha+1}\partial_{3} u\|^{(1-\frac{1}{2\alpha})(2-\frac{1}{\alpha})(1-\alpha)}\|\Lambda_j^{\alpha} u\|_{H^3}^\frac{1}{2\alpha}\\
\leq&C\sum_{j=1}^2\|u\|_{H^3}\| \Lambda_1^{\alpha} u\|_{H^3}^\frac{1}{2\alpha}\| \Lambda_2^{\alpha} u\|_{H^3}^{\frac{1}{2\alpha}(2-\frac{1}{\alpha})}\|\Lambda_{j}^{\alpha} u\|_{H^3}^{\frac{1}{2\alpha}+(1-\frac{1}{2\alpha})(2-\frac{1}{\alpha})}\\
\leq& C\|u\|_{H^3}\|(\Lambda_1^\alpha, \Lambda_{2}^\alpha) u\|_{H^3}^2.
\end{align*}
For the last term $I_{23}$, we have
\begin{align*}
I_{23}\leq&C\sum_{i=1}^2\|\partial_{3}^3 u\|^{1-\frac{1}{2\alpha}}\| \Lambda_1^{\alpha}\partial_{3}^3 u\|^\frac{1}{2\alpha}\|\partial_{3}^2 u\|^{1-\frac{1}{2\alpha}}\| \Lambda_2^{\alpha}\partial_{3}^2 u\|^\frac{1}{2\alpha}\|\partial_{3i} u_{i}\|^{1-\frac{1}{2\alpha}} \|\Lambda_3^{\alpha}\partial_{3i} u_{i}\|^\frac{1}{2\alpha} \\
\leq&C\sum_{i=1}^2\| u\|_{H^3}^{2-\frac{1}{\alpha}}\| \Lambda_1^{\alpha} u\|_{H^3}^\frac{1}{2\alpha}\| \Lambda_2^{\alpha} u\|_{H^3}^\frac{1}{2\alpha}\|\partial_{3i} u_{i}\|^{(1-\frac{1}{2\alpha})(\frac{1}{\alpha}-1)}\|\partial_{3i} u_{i}\|^{(1-\frac{1}{2\alpha})(2-\frac{1}{\alpha})}
\\&\times\|\Lambda_{i}^{\alpha} u_{i}\|_{H^2}^{\frac{1}{2\alpha}(\frac{1}{\alpha}-1)}\|\Lambda_{i}^{\alpha} u_{i}\|_{H^2}^{\frac{1}{2\alpha}(2-\frac{1}{\alpha})}\\
\leq&C\sum_{i=1}^2\| u\|_{H^3}^{2-\frac{1}{\alpha}}\| \Lambda_1^{\alpha} u\|_{H^3}^\frac{1}{2\alpha}\| \Lambda_2^{\alpha} u\|_{H^3}^\frac{1}{2\alpha}\|\partial_{3i} u_{i}\|^{(1-\frac{1}{2\alpha})(\frac{1}{\alpha}-1)}\|\Lambda_{i}^{\alpha} \partial_{3}u_{i}\|^{(1-\frac{1}{2\alpha})(2-\frac{1}{\alpha})\alpha}
\\&\times\|\Lambda_{i}^{\alpha+1}\partial_{3} u_{i}\|^{(1-\frac{1}{2\alpha})(2-\frac{1}{\alpha})(1-\alpha)}\|\Lambda_{i} u_{i}\|_{H^2}^{\frac{1}{2\alpha}(\frac{1}{\alpha}-1)\alpha}  \| u_{i}\|_{H^2}^{\frac{1}{2\alpha}(\frac{1}{\alpha}-1)(1-\alpha)}\|\Lambda_{i}^{\alpha} u_{i}\|_{H^2}^{\frac{1}{2\alpha}(2-\frac{1}{\alpha})}\\
\leq&C\sum_{i=1}^2\|u\|_{H^3}\| \Lambda_1^{\alpha} u\|_{H^3}^\frac{1}{2\alpha}\| \Lambda_2^{\alpha} u\|_{H^3}^\frac{1}{2\alpha}\|\Lambda_{i}^{\alpha} u\|_{H^3}^{2-\frac{1}{\alpha}}\leq C\| u\|_{H^3}\|(\Lambda_1^\alpha, \Lambda_{2}^\alpha) u\|_{H^3}^2.
\end{align*}
\noindent\textbf {Estimate of $I_3$.} The term $I_3$ can be decomposed into three parts
\begin{align*}
I_{3}&=-\sum_{i=1}^2 \int \partial_i u\cdot\nabla\partial_i^2 u\cdot \partial_i^3 u dx -2\sum_{j=1}^2 \int\partial_3 u_{j}\partial_{j}\partial_3^2 u\cdot \partial_i^3 u dx
+\sum_{i=1}^2\int \partial_i u_{i}\partial_{3}^3 u\cdot \partial_3^3 u dx\\
&:=I_{31}+2I_{32}+I_{33}.
\end{align*}
To estimate the term $I_{31}$, we can refer to $I_{11}$. Namely,
\begin{align*}
I_{31}\leq&C\sum_{i=1}^2\|\partial_{i}^3 u\|^{1-\frac{1}{2\alpha}}\| \Lambda_1^{\alpha}\partial_{i}^3 u\|^\frac{1}{2\alpha}\|\nabla\partial_{i}^2 u\|^{1-\frac{1}{2\alpha}}\| \Lambda_2^{\alpha}\nabla\partial_{i}^2 u\|^\frac{1}{2\alpha}\|\partial_{i} u\|^\frac{1}{2}\|\partial_{3i}  u\|^\frac{1}{2}\\
\leq&C\|u\|_{H^3}\|(\Lambda_1^\alpha,\Lambda_{2}^\alpha) u\|_{H^3}^2.
\end{align*}
For the term $I_{32}$, using Lemma \ref{lemmap}, Lemma \ref{lemmage} (i, ii, iv) and Young inequality, we obtain
\begin{align*}
I_{32}\leq&C\sum_{j=1}^2\|\partial_{3}^2 u\|^{1-\frac{1}{2\alpha}}\| \Lambda_1^{\alpha}\partial_{3}^2 u\|^\frac{1}{2\alpha} \|\partial_{3} u_{j}\|^{1-\frac{1}{2\alpha}} \|\Lambda_2^{\alpha}\partial_{3} u_{j}\|^\frac{1}{2\alpha}\|\partial_{3}^2\partial_{j} u\|^{1-\frac{1}{2\alpha}}  \|\Lambda_3^{\alpha}\partial_{3}^2\partial_{j} u\|^\frac{1}{2\alpha}\\
\leq&C\sum_{i=j}^2\| u\|_{H^3}^{2-\frac{1}{\alpha}}\| \Lambda_1^{\alpha} u\|_{H^3}^\frac{1}{2\alpha}\|\Lambda_2^{\alpha}\partial_{3} u_{j}\|^{\frac{1}{2\alpha}(\frac{1}{\alpha}-1)}\|\Lambda_2^{\alpha}\partial_{3} u_{j}\|^{\frac{1}{2\alpha}(2-\frac{1}{\alpha})}   \|\partial_{3}^2\partial_{j} u\|^{(1-\frac{1}{2\alpha})(\frac{1}{\alpha}-1)}
\\&\times \|\partial_{3}^2\partial_{j} u\|^{(1-\frac{1}{2\alpha})(2-\frac{1}{\alpha})}\|\Lambda_2^{\alpha} u\|_{H^2}^\frac{1}{2\alpha}\\
\leq&C\sum_{i=j}^2\| u\|_{H^3}^{2-\frac{1}{\alpha}}\| \Lambda_1^{\alpha} u\|_{H^3}^\frac{1}{2\alpha}\|\Lambda_2\partial_{3} u_{j}\|^{\frac{1}{2\alpha}(\frac{1}{\alpha}-1)\alpha}\|\partial_{3} u_{j}\|^{\frac{1}{2\alpha}(\frac{1}{\alpha}-1)(1-\alpha)}\|\Lambda_2^{\alpha}\partial_{3} u_{j}\|^{\frac{1}{2\alpha}(2-\frac{1}{\alpha})}
\\&\times \|\partial_{3}^2\partial_{j} u\|^{(1-\frac{1}{2\alpha})(\frac{1}{\alpha}-1)}  \|\Lambda_{j}^{\alpha}\partial_{3}^2 u\|^{(1-\frac{1}{2\alpha})(2-\frac{1}{\alpha})\alpha} \|\Lambda_{j}^{\alpha+1}\partial_{3}^2 u\|^{(1-\frac{1}{2\alpha})(2-\frac{1}{\alpha})(1-\alpha)}\|\Lambda_2^{\alpha} u\|_{H^3}^\frac{1}{2\alpha}\\
\leq&C\sum_{j=1}^2\|u\|_{H^3}\| \Lambda_1^{\alpha} u\|_{H^3}^\frac{1}{2\alpha}\| \Lambda_2^{\alpha} u\|_{H^3}^{\frac{1}{2\alpha}+\frac{1}{2\alpha}(2-\frac{1}{\alpha})}\|\Lambda_{j}^{\alpha} u\|_{H^3}^{(1-\frac{1}{2\alpha})(2-\frac{1}{\alpha})}
\leq C\| u\|_{H^3}\|(\Lambda_1^\alpha,\Lambda_{2}^\alpha) u\|_{H^3}^2.
\end{align*}
For the last term $I_{33}$, by using again Lemma \ref{lemmap}, Lemma \ref{lemmage} (i, ii, iv) and Young inequality that
\begin{align*}
I_{23}\leq&C\sum_{i=1}^2 \|\partial_{3}^3 u\|^{2-\frac{1}{\alpha}} \|\Lambda_1^{\alpha}\partial_{3}^3 u\|^\frac{1}{2\alpha} \|\Lambda_2^{\alpha}\partial_{3}^3 u\|^\frac{1}{2\alpha} \|\partial_{i} u_{i}\|^{1-\frac{1}{2\alpha}} \|\Lambda_3^{\alpha}\partial_{i} u_{i}\|^\frac{1}{2\alpha}  \\
\leq&C\sum_{i=1}^2 \| u\|_{H^3}^{2-\frac{1}{\alpha}} \|\Lambda_1^{\alpha} u\|_{H^3}^\frac{1}{2\alpha}\| \Lambda_2^{\alpha} u\|_{H^3}^\frac{1}{2\alpha}\|\partial_{i} u_{i}\|^{(1-\frac{1}{2\alpha})(\frac{1}{\alpha}-1)}\|\partial_{i} u_{i}\|^{(1-\frac{1}{2\alpha})(2-\frac{1}{\alpha})}
\\&\times\|\Lambda_{i}^{\alpha} u_{i}\|_{H^1}^{\frac{1}{2\alpha}(\frac{1}{\alpha}-1)}\|\Lambda_{i}^{\alpha} u_{i}\|_{H^1}^{\frac{1}{2\alpha}(2-\frac{1}{\alpha})}\\
\leq&C\sum_{i=1}^2\|u\|_{H^3}^{2-\frac{1}{\alpha}} \|\Lambda_1^{\alpha} u\|_{H^3}^\frac{1}{2\alpha} \|\Lambda_2^{\alpha} u\|_{H^3}^\frac{1}{2\alpha} \|\partial_{i} u_{i}\|^{(1-\frac{1}{2\alpha})(\frac{1}{\alpha}-1)} \|\Lambda_{i}^{\alpha} u_{i}\|^{(1-\frac{1}{2\alpha})(2-\frac{1}{\alpha})\alpha}
\\&\times\|\Lambda_{i}^{\alpha+1} u_{i}\|^{(1-\frac{1}{2\alpha})(2-\frac{1}{\alpha})(1-\alpha)} \|\Lambda_{i} u_{i}\|_{H^1}^{\frac{1}{2\alpha}(\frac{1}{\alpha}-1)\alpha}  \| u_{i}\|_{H^1}^{\frac{1}{2\alpha}(\frac{1}{\alpha}-1)(1-\alpha)}\|\Lambda_{i}^{\alpha} u_{i}\|_{H^1}^{\frac{1}{2\alpha}(2-\frac{1}{\alpha})}\\
\leq&C\sum_{i=1}^2\|u\|_{H^3}\| \Lambda_1^{\alpha} u\|_{H^3}^\frac{1}{2\alpha}\| \Lambda_2^{\alpha} u\|_{H^3}^\frac{1}{2\alpha}\|\Lambda_{i}^{\alpha} u\|_{H^3}^{2-\frac{1}{\alpha}}\leq C\| u\|_{H^3}\|(\Lambda_1^\alpha,\Lambda_{2}^\alpha) u\|_{H^2}^2.
\end{align*}
Combining these estimates, we finally get
\[
G_{1}\leq C\| u\|_{H^3}\|(\Lambda_1^\alpha, \Lambda_{2}^\alpha) u\|_{H^3}^2.
\]
\noindent\underline{ \textbf {Estimate of $H_1$.}}
For $i\in\{1, 2, 3\}$, we do the following decomposition on
\begin{align*}
H_1=&-\sum_{i=1}^3 \int \partial_{i}^3 u\cdot\nabla b\cdot \partial_{i}^3 b dx-3\sum_{m=1}^2\sum_{i=1}^3 \int \partial_i^{3-m} u\cdot\nabla \partial_i^{m} b\cdot \partial_{i}^3 b dx:=K_1+3K_2.
\end{align*}
\noindent\textbf {Estimate of $K_1$.} We rewrite it as
\begin{align*}
K_1=-\sum_{i=1}^2 \int \partial_{i}^3 u\cdot\nabla b\cdot \partial_{i}^3 b dx-\int \partial_{3}^3 u\cdot\nabla b\cdot \partial_{3}^3 b dx=K_{11}+K_{12}.
\end{align*}
For the estimate of $k_{11}$, we consider the following two cases.
Case 1: $i=k$. Since the each magnetic equation lacks dissipation in its own direction, take full advantage of anisotropic dissipation, we decompose $K_{111}$ into two terms also by using the divergence free condition $div~b=0$ as follows
\begin{align*}
K_{111}=&\sum_{i=1}^2 \int \partial_{i}^3 u\cdot\nabla b_i\partial_{i}^2\partial_{i'} b_{i'} dx +\sum_{i=1}^2\int \partial_{i}^3 u\cdot\nabla b_i\partial_{i}^2\partial_{i''} b_{i''} dx :=K_{1111}+K_{1112}.
\end{align*}
\noindent Where $i, i', i''\in \{1, 2, 3\}$ satisfy $\varepsilon_{ii'i''}\neq0$.
By using (ii, iii) of Lemma \ref{lemmage} and Young inequality, we get
\begin{align*}
K_{111}\leq&C\sum_{i=1}^2 \|\partial_{i}^3 u\|^{1-\frac{1}{2\alpha}} \|\Lambda_i^{\alpha}\partial_{i}^3 u\|^\frac{1}{2\alpha} \|\partial_{i}^2\partial_{i'} b_{i'}\|^{1-\frac{1}{2\beta}} \|\Lambda_{i''}^{\beta}\partial_{i}^2\partial_{i'} b_{i'}\|^\frac{1}{2\beta} \|\nabla b_i\|^\frac{1}{2} \|\nabla\partial_{i'} b_i\|^\frac{1}{2}\\
\leq&C \|b\|_{H^3}\sum_{i=1}^2 \|\Lambda_i^{\alpha} u\|_{H^3} \|\Lambda_i^{\beta} b_{i'}\|_{H^3}^{1-\frac{1}{2\beta}} \|\Lambda_{i''}^{\beta} b_{i'}\|_{H^3}^{\frac{1}{2\alpha}}\\
\leq &C \|b\|_{H^3}\sum_{i=1}^2 \left(\| \Lambda_i^{\alpha} u\|_{H^3}^2+\|(\Lambda_i^\beta, \Lambda_{i''}^\beta) b_{i'}\|_{H^3}^2\right).
\end{align*}
The last term $K_{112}$ can be estimated similarly as $K_{111}$.
\begin{align*}
K_{112}\leq &C\| b\|_{H^3}\sum_{i=1}^2\left(\| \Lambda_i^{\alpha} u\|_{H^3}^2+\|(\Lambda_i^\beta, \Lambda_{i'}^\beta) b_{i''}\|_{H^3}^2\right).
\end{align*}

\noindent Case 2: $ i\neq k$. we do the following decomposition
\begin{align*}
K_{112}&=-\sum_{i=1}^2\int \partial_{i}^3 u\cdot\nabla b_{i'}\partial_{i}^3 b_{i'} dx-\sum_{i=1}^2\int \partial_{i}^3 u\cdot\nabla b_{i''}\partial_{i}^3 b_{i''} dx :=K_{1121}+K_{1122}.
\end{align*}
Here $K_{1122}$ and $K_{1121}$ can be bounded in the same way and then by utilizing Lemma \ref{lemmage} (ii, iii) and Young inequality, one has the first estimate
\begin{align*}
K_{1121}\leq&C\sum_{i=1}^2\|\partial_{i}^3 u\|^{1-\frac{1}{2\alpha}}\| \Lambda_i^{\alpha}\partial_{i}^3 u\|^\frac{1}{2\alpha}\|\partial_{i}^3 b_{i'}\|^{1-\frac{1}{2\beta}}\| \Lambda_{i''}^{\beta}\partial_{i}^3 b_{i'}\|^\frac{1}{2\beta} \|\nabla b_{i'}\|^\frac{1}{2}\|\partial_{i'}\nabla b_{i'}\|^\frac{1}{2}\\
\leq&C\|b\|_{H^3}\sum_{i=1}^2\|\Lambda_i^{\alpha} u\|_{H^3}\|\Lambda_i^{\beta}b_{i'}\|_{H^3}^{1-\frac{1}{2\beta}}\|\Lambda_{i''}^{\beta}b_{i'}\|_{H^3}^{\frac{1}{2\beta}}\\
\leq &C\| b\|_{H^3}\sum_{i=1}^2\left(\| \Lambda_i^{\alpha} u\|_{H^3}^2+\|(\Lambda_i^\beta, \Lambda_{i''}^\beta) b_{i'}\|_{H^3}^2\right)
\end{align*}
and
\begin{align*}
K_{1122}\leq &C\| b\|_{H^3}\sum_{i=1}^2\left(\| \Lambda_i^{\alpha} u\|_{H^3}^2+\|(\Lambda_i^\beta, \Lambda_{i'}^\beta) b_{i''}\|_{H^3}^2\right).
\end{align*}
Combining these cases, we get the estimate of the $K_{11}, $
\begin{align*}\label{k11}
K_{11}\leq C\|b\|_{H^3}\sum_{i=1}^2\left(\|\Lambda_i^{\alpha} u\|_{H^3}^2+\|(\Lambda_i^\beta, \Lambda_{i''}^\beta) b_{i'}\|_{H^3}^2\right).
\end{align*}

\noindent To estimate the term $K_{12}$, we consider the following five cases
\begin{align*}
1. ~ 3=&k=j ; \quad 2. ~ 3=k\neq j;\quad 3. ~ 3=j\neq k;\quad 4. ~ 3\neq j\neq k; \quad 5. ~ j=k\neq 3.
\end{align*}
Case 1: $3=k=j$. Since the velocity equations lack vertical dissipation and the each magnetic equation lacks dissipation in its own direction, we decompose $K_{121}$ into two terms also by using the divergence free condition $div~u=div~b=0$ as follows
\begin{align*}
K_{121}&=\int \partial_{3}^2\partial_{1} u_{1} \partial_{3} b_{3}\partial_{3}^2\partial_{1} b_{1} dx +\int \partial_{3}^2\partial_{1} u_{1} \partial_{3} b_{3}\partial_{3}^2\partial_{2} b_{2} dx +\int \partial_{3}^2\partial_{2} u_{2} \partial_{3} b_{3}\partial_{3}^2\partial_{1} b_{1} dx
\\&~~~+\int \partial_{3}^2\partial_{2} u_{2} \partial_{3} b_{3}\partial_{3}^2\partial_{2} b_{2} dx \\
&:=K_{1211}+K_{1212}+K_{1213}+K_{1214}.
\end{align*}
For the first term $K_{1211}$, taking advantage of Lemma \ref{lemmage} (ii, iii) and Young inequality, we get
\begin{align*}
K_{1211}\leq&C \|\partial_{3}^2\partial_{1} u_{1}\|^{1-\frac{1}{2\alpha}} \|\Lambda_1^{\alpha}\partial_{3}^2\partial_{1} u_{1}\|^\frac{1}{2\alpha} \|\partial_{3}^2\partial_{1} b_{1}\|^{1-\frac{1}{2\beta}} \|\Lambda_{3}^{\beta}\partial_{3}^2\partial_{1} b_{1}\|^\frac{1}{2\beta} \|\partial_{3} b_{3}\|^\frac{1}{2} \|\partial_{23} b_{3}\|^\frac{1}{2} \\
\leq&C\|b\|_{H^3}\|\Lambda_1^{\alpha} u_{1}\|_{H^3} \|\Lambda_{3}^{\beta} b_{1}\|_{H^3}
\leq C\|b\|_{H^3}(\|\Lambda_1^{\alpha} u_{1}\|_{H^3}^2+ \|\Lambda_{3}^\beta b_{1}\|_{H^3}^2).
\end{align*}
The terms $K_{1212}$, $K_{1213}$ and $K_{1214}$ can be bounded in same way,
\begin{align*}
K_{1212}+K_{1213}+K_{1214}\leq C \|b\|_{H^3} \left(\|(\Lambda_1^{\alpha} u_{1}, \Lambda_2^{\alpha} u_{2})\|_{H^3}^2+\|(\Lambda_{3}^\beta b_{1}, \Lambda_{3}^\beta b_{2})\|_{H^3}^2\right).
\end{align*}
\noindent Case 2: $k=3 $ and $ j\neq3$. We do the following decomposition
\begin{align*}
K_{122}&=\int \partial_{3}^3 u_{1} \partial_{1} b_{3}\partial_{3}^2\partial_{1} b_{1} dx +\int \partial_{3}^3 u_{1} \partial_{1} b_{3}\partial_{3}^2\partial_{2} b_{2} dx +\int \partial_{3}^3 u_{2} \partial_{2} b_{3}\partial_{3}^2\partial_{1} b_{1} dx \\&~~~+\int \partial_{3}^3 u_{2} \partial_{2} b_{3}\partial_{3}^2\partial_{2} b_{2} dx \\
&:=K_{1221}+K_{1222}+K_{1223}+K_{1224}.
\end{align*}
Since the structures of terms $K_{1221}$, $K_{1222}$, $K_{1223}$ and $K_{1224}$ are same, we only estimate $K_{1221}$.
Referring to Lemma \ref{lemmage} (i, ii, iv) and Young inequality, we can infer
\begin{align*}
K_{1221}\leq&\|\partial_{3}^3 u_{1}\|^{1-\frac{1}{2\alpha}} \|\Lambda_1^{\alpha}\partial_{3}^3 u_{1}\|^\frac{1}{2\alpha} \|\partial_{3}^2\partial_{1} b_{1}\|^{1-\frac{1}{2\beta}} \|\Lambda_{3}^{\beta}\partial_{3}^2\partial_{1} b_{1}\|^\frac{1}{2\beta} \|\partial_{1} b_{3}\|^{1-\frac{1}{2\beta}} \|\Lambda_{2}^{\beta}\partial_{1} b_{3}\|^\frac{1}{2\beta} \\
\leq&C \|\partial_{3}^3 u_{1}\|^{1-\frac{1}{2\alpha}} \|\Lambda_1^{\alpha}\partial_{3}^3 u_{1}\|^\frac{1}{2\alpha} \|\partial_{3}^2\partial_{1} b_{1}\|^{(1-\frac{1}{2\beta})(\frac{1}{2\alpha}+\frac{1}{2\beta}-1)} \|\partial_{3}^2\partial_{1} b_{1}\|^{(1-\frac{1}{2\beta})(2-\frac{1}{2\alpha}-\frac{1}{2\beta})} \\&\times \|\Lambda_{3}^{\beta}\partial_{3}^2\partial_{1} b_{1}\|^\frac{1}{2\beta} \|\partial_{1} b_{3}\|^{1-\frac{1}{2\beta}} \|\Lambda_{2}^{\beta}\partial_{1} b_{3}\|^{\frac{1}{2\beta}(\frac{1}{2\alpha}+\frac{1}{2\beta}-1)} \|\Lambda_{2}^{\beta}\partial_{1} b_{3}\|^{\frac{1}{2\beta}(2-\frac{1}{2\alpha}-\frac{1}{2\beta})} \\
\leq&C\| u\|_{H^3}^{1-\frac{1}{2\alpha}}\| \Lambda_1^{\alpha} u_{1}\|_{H^3}^\frac{1}{2\alpha}\|\partial_{3}^2\partial_{1} b_{1}\|^{(1-\frac{1}{2\beta})(\frac{1}{2\alpha}+\frac{1}{2\beta}-1)} \|\Lambda_{3}^{\beta} b_{1}\|_{H^3}^{(1-\frac{1}{2\beta})(2-\frac{1}{2\alpha}-\frac{1}{2\beta})} \\&\times \|\Lambda_{3}^{\beta}\partial_{3}^2\partial_{1} b_{1}\|^\frac{1}{2\beta} \|\partial_{1} b_{3}\|^{1-\frac{1}{2\beta}}  \|b\|_{H^3}^{\frac{1}{2\beta}(\frac{1}{2\alpha}+\frac{1}{2\beta}-1)}\|\Lambda_{2}^{\beta}\partial_{1} b_{3}\|^{\frac{1}{2\beta}(2-\frac{1}{2\alpha}-\frac{1}{2\beta})} \\
\leq& C\|(u, b)\|_{H^3}\left(\|\Lambda_1^{\alpha} u_{1}\|_{H^3}^2+\|(\Lambda_2^\beta b_{3}, \Lambda_{3}^\beta b_{1})\|_{H^3}^2\right)
\end{align*}
and
\begin{align*}
K_{1222}+K_{1223}+K_{1224}\leq &C \|(u,b)\|_{H^3} \\&\times\left(\|(\Lambda_1^{\alpha} u_{1}, \Lambda_2^{\alpha} u_{2})\|_{H^3}^2+\|(\Lambda_1^\beta, \Lambda_{2}^\beta) b_{3}\|_{H^3}^2+\|( \Lambda_{3}^\beta b_{1}, \Lambda_{3}^\beta b_{2})\|_{H^3}^2\right).
\end{align*}
\noindent Case 3: $j=3$ and $k\neq3$. For the term $K_{123}$, we split it into four parts
\begin{align*}
K_{123}&=-\int \partial_3^3 u_3\partial_3 b_{1}\partial_3^3 b_{1} dx-\int \partial_3^3 u_3\partial_3 b_{2}\partial_3^3 b_{2} dx \\
&=\int \partial_3^2\partial_{1} u_1\partial_3 b_{1}\partial_3^3 b_{1} dx+ \int \partial_3^2\partial_{1} u_1\partial_3 b_{2}\partial_3^3 b_{2} dx
\\&~~~+\int \partial_3^2\partial_{2} u_2\partial_3 b_{1}\partial_3^3 b_{1} dx+ \int \partial_3^2\partial_{2} u_2\partial_3 b_{2}\partial_3^3 b_{2} dx\\
&:= K_{1231}+K_{1232}+K_{1233}+K_{1234}.
\end{align*}
Note that the case 3 can be treated by the way of the case 1, and then we obtain
\begin{align*}
K_{123}\leq C\|b\|_{H^3}\left(\|(\Lambda_1^{\alpha}, \Lambda_2^{\alpha})(u_{1}, u_{2})\|_{H^3}^2+\|\left((\Lambda_2^\beta, \Lambda_{3}^\beta) b_{1}, (\Lambda_1^\beta, \Lambda_{3}^\beta) b_{2}\right)\|_{H^3}^2\right).
\end{align*}
\noindent Case 4: $ j\neq k\neq3$. We write
\begin{align*}
K_{124}=&\int \partial_{3}^3 u_{1} \partial_{1} b_{2}\partial_{3}^3 b_{2} dx +\int \partial_{3}^3 u_{2} \partial_{2} b_{1}\partial_{3}^3 b_{1} dx:=K_{1241}+K_{1242}.
\end{align*}
Thanks to Lemma \ref{lemmage} (i, ii, iv) and Young inequality, we obtain
\begin{align*}
K_{1241}\leq&C\|\partial_{3}^3 u_{1}\|^{1-\frac{1}{2\alpha}}\| \Lambda_2^{\alpha}\partial_{3}^3 u_{1}\|^\frac{1}{2\alpha}\|\partial_{3}^3 b_{2}\|^{1-\frac{1}{2\beta}}\| \Lambda_{1}^{\beta}\partial_{3}^3 b_{2}\|^\frac{1}{2\beta} \|\partial_{1} b_{2}\|^{1-\frac{1}{2\beta}}\|\Lambda_{3}^{\beta}\partial_{1} b_{2}\|^\frac{1}{2\beta} \\
\leq&C\| u\|_{H^3}^{1-\frac{1}{2\alpha}}\| \Lambda_2^{\alpha} u_{1}\|_{H^3}^\frac{1}{2\alpha}\|b\|_{H^3}^{1-\frac{1}{2\beta}}\| \Lambda_{1}^{\beta} b_{2}\|_{H^3}^\frac{1}{2\beta}\|\partial_{1} b_{2}\|^{(1-\frac{1}{2\beta})(\frac{1}{2\alpha}+\frac{1}{2\beta}-1)} \\&\times\|\partial_{1} b_{2}\|^{(1-\frac{1}{2\beta})(2-\frac{1}{2\alpha}-\frac{1}{2\beta})}\|\Lambda_{3}^{\beta}\partial_{1} b_{2}\|^{\frac{1}{2\beta}(\frac{1}{2\alpha}+\frac{1}{2\beta}-1)}\|\Lambda_{3}^{\beta}\partial_{1} b_{2}\|^{\frac{1}{2\beta}(2-\frac{1}{2\alpha}-\frac{1}{2\beta})} \\
\leq&C\| u\|_{H^3}^{1-\frac{1}{2\alpha}}\| \Lambda_2^{\alpha} u_{1}\|_{H^3}^\frac{1}{2\alpha}\|b\|_{H^3}^{1-\frac{1}{2\beta}}\| \Lambda_{1}^{\beta} b_{2}\|_{H^3}^\frac{1}{2\beta}\|\partial_{1} b_{2}\|^{(1-\frac{1}{2\beta})(\frac{1}{2\alpha}+\frac{1}{2\beta}-1)} \\&\times\|\Lambda_{1}^{\beta} b_{2}\|_{H^3}^{(1-\frac{1}{2\beta})(2-\frac{1}{2\alpha}-\frac{1}{2\beta})} \|b\|_{H^3}^{\frac{1}{2\beta}(\frac{1}{2\alpha}+\frac{1}{2\beta}-1)}\|\Lambda_{3}^{\beta} b_{2}\|_{H^3}^{\frac{1}{2\beta}(2-\frac{1}{2\alpha}-\frac{1}{2\beta})} \\
\leq& C\|(u, b)\|_{H^3}\left(\|\Lambda_2^{\alpha} u_{1}\|_{H^3}^2+\|(\Lambda_1^\beta, \Lambda_{3}^\beta) b_{2}\|_{H^3}^2\right)
\end{align*}
and
\begin{align*}
K_{1242}\leq C\|(u, b)\|_{H^3}\left(\|\Lambda_1^{\alpha} u_{2}\|_{H^3}^2+\|(\Lambda_1^\beta, \Lambda_{2}^\beta) b_{3}\|_{H^3}^2\right).
\end{align*}
\noindent Case 5: $ j=k\neq 3$. We do the decomposition
\begin{align*}
K_{125}=&\int \partial_{3}^3 u_{1} \partial_{1} b_{1}\partial_{3}^3 b_{1} dx +\int \partial_{3}^3 u_{2} \partial_{2} b_{2}\partial_{3}^3 b_{2} dx:=K_{1251}+K_{1252}.
\end{align*}
The estimate of the term $I_{1251}$ is similar to the term $I_{1221}$, we have
\begin{align*}
K_{1251}\leq&C\|\partial_{3}^3 u_{1}\|^{1-\frac{1}{2\alpha}}\| \Lambda_1^{\alpha}\partial_{3}^3 u_{1}\|^\frac{1}{2\alpha}\|\partial_{3}^3 b_{1}\|^{1-\frac{1}{2\beta}}\| \Lambda_{3}^{\beta}\partial_{3}^3 b_{1}\|^\frac{1}{2\beta} \|\partial_{1} b_{1}\|^{1-\frac{1}{2\beta}}\|\Lambda_{2}^{\beta}\partial_{1} b_{1}\|^\frac{1}{2\beta} \\
\leq&C\|\partial_{3}^3 u_{1}\|^{1-\frac{1}{2\alpha}}\| \Lambda_1^{\alpha}\partial_{3}^3 u_{1}\|^\frac{1}{2\alpha}\|\partial_{3}^3 b_{1}\|^{(1-\frac{1}{2\beta})(\frac{1}{2\alpha}+\frac{1}{2\beta}-1)} \|\partial_{3}^3 b_{1}\|^{(1-\frac{1}{2\beta})(2-\frac{1}{2\alpha}-\frac{1}{2\beta})}\\&\times \|\Lambda_{3}^{\beta}\partial_{3}^3 b_{1}\|^\frac{1}{2\beta}\|\partial_{1} b_{1}\|^{1-\frac{1}{2\beta}} \|\Lambda_{2}^{\beta}\partial_{1} b_{1}\|^{\frac{1}{2\beta}(\frac{1}{2\alpha}+\frac{1}{2\beta}-1)}\|\Lambda_{2}^{\beta}\partial_{1} b_{1}\|^{\frac{1}{2\beta}(2-\frac{1}{2\alpha}-\frac{1}{2\beta})} \\
\leq&C\| u\|_{H^3}^{1-\frac{1}{2\alpha}}\| \Lambda_1^{\alpha} u_{1}\|_{H^3}^\frac{1}{2\alpha}\|\partial_{3}^3 b_{1}\|^{(1-\frac{1}{2\beta})(\frac{1}{2\alpha}+\frac{1}{2\beta}-1)} \|\Lambda_{3}^{\beta} b_{1}\|_{H^3}^{(1-\frac{1}{2\beta})(2-\frac{1}{2\alpha}-\frac{1}{2\beta})} \\&\times \|\Lambda_{3}^{\beta}\partial_{3}^3 b_{1}\|^\frac{1}{2\beta}\|\partial_{1} b_{1}\|^{1-\frac{1}{2\beta}} \| b\|_{H^3}^{\frac{1}{2\beta}(\frac{1}{2\alpha}+\frac{1}{2\beta}-1)}\|\Lambda_{2}^{\beta}\partial_{1} b_{1}\|^{\frac{1}{2\beta}(2-\frac{1}{2\alpha}-\frac{1}{2\beta})} \\
\leq& C\|(u, b)\|_{H^3}\left(\|\Lambda_1^{\alpha} u_{1}\|_{H^3}^2+\|(\Lambda_2^\beta, \Lambda_{3}^\beta) b_{1}\|_{H^3}^2\right)
\end{align*}
and
\begin{align*}
K_{1252}\leq \|(u,b)\|_{H^3}\left(\|\Lambda_2^{\alpha} u_{2}\|_{H^3}^2+\|(\Lambda_1^\beta, \Lambda_{3}^\beta) b_{2}\|_{H^3}^2\right).
\end{align*}
\noindent\textbf {Estimate of $K_2$.} For the term $K_{2}$, we just discuss the three cases.
\begin{align*}
1. ~ i=k\neq3; \quad 2. ~ i=k=3; \quad 3. ~ i\neq k.
\end{align*}
\noindent Case 1: $i=k\neq3$. We decompose $K_{21}$ into two terms also by using the $div~b=0$ as follows
\begin{align*}
K_{21}=&\sum_{i,m=1}^2 \int \partial_i^{3-m} u\cdot\nabla \partial_i^{m} b_i\partial_i^2\partial_{i'} b_{i'} dx +\sum_{i,m=1}^2\int \partial_i^{3-m} u\cdot\nabla \partial_i^{m} b_i\partial_i^2\partial_{i''} b_{i''} dx :=K_{211}+K_{212}.
\end{align*}
To estimate the first term $K_{211}$, we use Lemma \ref{lemmage} (i, ii, iv) and Young inequality to get
\begin{align*}
K_{211}\leq&C\sum_{i,m=1}^2\|\nabla\partial_i^{m} b_i\|^{1-\frac{1}{2\beta}}\|\Lambda_{i'}^{\beta}\nabla\partial_i^{m} b_i\|^\frac{1}{2\beta} \|\partial_i^2\partial_{i'} b_{i'}\|^{1-\frac{1}{2\beta}} \|\Lambda_{i''}^{\beta}\partial_i^2\partial_{i'} b_{i'}\|^\frac{1}{2\beta}  \\ &\times\|\partial_i^{3-m} u\|^{1-\frac{1}{2\alpha}} \|\Lambda_i^{\alpha}\partial_i^{3-m} u\|^\frac{1}{2\alpha}\\
\leq&C\sum_{i,m=1}^2\|\nabla \partial_i^{m} b_i\|^{1-\frac{1}{2\beta}}\|\Lambda_{i'}^{\beta}\nabla\partial_i^{m} b_i\|^\frac{1}{2\beta}\|\partial_i^2\partial_{i'} b_{i'}\|^{(1-\frac{1}{2\beta})(\frac{1}{2\alpha}+\frac{1}{2\beta}-1)} \|\Lambda_{i}^{\beta} b_{i'}\|_{H^3}^{(1-\frac{1}{2\beta})(2-\frac{1}{2\alpha}-\frac{1}{2\beta})}
\\ &\times\|\Lambda_{i''}^{\beta}\partial_i^2\partial_{i'} b_{i'}\|^\frac{1}{2\beta} \|\partial_i^{3-m} u\|^{1-\frac{1}{2\alpha}} \|u\|_{H^3}^{\frac{1}{2\beta}(\frac{1}{2\alpha}+\frac{1}{2\beta}-1)} \|\Lambda_i^{\alpha}\partial_i^{3-m} u\|^{\frac{1}{2\beta}(2-\frac{1}{2\alpha}-\frac{1}{2\beta})}\\
\leq &C\|(u, b)\|_{H^3}\sum_{i=1}^2\left(\| \Lambda_i^{\alpha} u\|_{H^3}^2+\|\Lambda_{i'}^{\beta}b_i\|_{H^3}^2+\|(\Lambda_i^\beta, \Lambda_{i''}^\beta) b_{i'}\|_{H^3}^2\right).
\end{align*}
Similarly,
\[
K_{212}\leq C\|(u, b)\|_{H^3}\sum_{i=1}^2\left(\| \Lambda_i^{\alpha} u\|_{H^3}^2+\|\Lambda_{i''}^{\beta}b_i\|_{H^3}^2+\|(\Lambda_i^\beta, \Lambda_{i'}^\beta) b_{i''}\|_{H^3}^2\right).
\]
Case 2: $i=k=3$. By using the divergence free condition $div~b=0$, we decompose $K_{22}$ into two terms
\begin{align*}
K_{22}=\sum_{m=1}^2\int \partial_{3}^{3-m} u\cdot\nabla\partial_{3}^{m} b_{3}\partial_3^2\partial_{1} b_{1} dx +\sum_{m=1}^2\int \partial_{3}^{3-m} u\cdot\nabla\partial_{3}^{m} b_{3}\partial_3^2\partial_{2} b_{2} dx :=K_{221}+K_{222}.
\end{align*}
To estimate the first term $K_{221}$, it is again sufficient thank to Lemma \ref{lemmage} (i, ii, iv) and Young inequality to obtain that
\begin{align*}
K_{221}\leq&C\sum_{m=1}^2\|\nabla\partial_3^{m} b_3\|^{1-\frac{1}{2\beta}} \|\Lambda_{2}^{\beta}\nabla\partial_3^{m} b_3\|^\frac{1}{2\beta} \|\partial_3^2\partial_{1} b_{1}\|^{1-\frac{1}{2\beta}} \| \Lambda_{3}^{\beta}\partial_3^2\partial_{1} b_{1}\|^\frac{1}{2\beta} \|\partial_3^{3-m} u\|^{1-\frac{1}{2\alpha}} \\ &\times\|\Lambda_1^{\alpha}\partial_3^{3-m} u\|^\frac{1}{2\alpha}\\
\leq&C\sum_{m=1}^2\|\nabla\partial_3^{m} b_3\|^{1-\frac{1}{2\beta}} \|\Lambda_{2}^{\beta}\nabla\partial_3^{m} b_3\|^\frac{1}{2\beta} \|\partial_3^2\partial_{1} b_{1}\|^{(1-\frac{1}{2\beta})(\frac{1}{2\alpha}+\frac{1}{2\beta}-1)}  \|\Lambda_{3}^{\beta} b_{1}\|_{H^3}^{(1-\frac{1}{2\beta})(2-\frac{1}{2\alpha}-\frac{1}{2\beta})}
\\&\times \|\Lambda_{3}^{\beta}\partial_3^2\partial_{1} b_{1}\|^\frac{1}{2\beta}\|\partial_3^{3-m} u\|^{1-\frac{1}{2\alpha}} \|u\|_{H^3}^{\frac{1}{2\beta}(\frac{1}{2\alpha}+\frac{1}{2\beta}-1)} \|\Lambda_1^{\alpha}\partial_3^{3-m} u\|^{\frac{1}{2\beta}(2-\frac{1}{2\alpha}-\frac{1}{2\beta})}\\
\leq &C\|(u, b)\|_{H^3}\left(\|\Lambda_1^{\alpha} u\|_{H^3}^2+\|(\Lambda_3^\beta b_{1}, \Lambda_{2}^\beta b_{3})\|_{H^3}^2\right).
\end{align*}
In the similar way, we have
\[
K_{222}\leq C\|(u, b)\|_{H^3}\left(\| \Lambda_1^{\alpha} u\|_{H^3}^2+\|(\Lambda_3^\beta b_{2}, \Lambda_{2}^\beta b_{3})\|_{H^3}^2\right).
\]
\noindent Case 3: $i\neq k$. We write that
\begin{align*}
K_{23}&=-\sum_{m=1}^2\sum_{i=1}^3\int \partial_i^{3-m} u\cdot\nabla\partial_i^{m} b_{i'}\partial_{i}^3 b_{i'} dx-\sum_{m=1}^2\sum_{i=1}^3\int \partial_i^{3-m} u\cdot\nabla\partial_i^{m} b_{i''}\partial_{i}^3 b_{i''} dx
\\&:=K_{231}+K_{232}.
\end{align*}
It follows by using Lemma \ref{lemmage} (ii, iii) and Young inequality that
\begin{align*}
K_{231}\leq&C\sum_{m=1}^2\sum_{i=1}^3 \|\partial_{i}^3 b_{i'}\|^{1-\frac{1}{2\beta}} \|\Lambda_i^{\beta}\partial_{i}^3 b_{i'}\|^\frac{1}{2\beta} \|\nabla\partial_i^{m} b_{i'}\|^{1-\frac{1}{2\beta}} \|\Lambda_{i''}^{\beta}\nabla\partial_i^{m} b_{i'}\|^\frac{1}{2\beta} \\ &\times\|\partial_i^{3-m} u_{i}\|^\frac{1}{2} \|\partial_{i'}\partial_i^{3-m} u_{i}\|^\frac{1}{2}\\
\leq&C\|u\|_{H^3}\sum_{i=1}^3\|\Lambda_i^{\beta}b_{i'}\|_{H^3}^{2-\frac{1}{2\beta}}\|\Lambda_{i''}^{\beta}b_{i'}\|_{H^3}^{\frac{1}{2\beta}}
\leq C\| u\|_{H^3}\sum_{i=1}^3\|(\Lambda_i^\beta, \Lambda_{i''}^\beta) b_{i'}\|_{H^3}^2.
\end{align*}
We use the same argument to get the estimate
\begin{align*}
K_{232}\leq C\| u\|_{H^3}\sum_{i=1}^3\|(\Lambda_i^\beta, \Lambda_{i'}^\beta) b_{i''}\|_{H^3}^2.
\end{align*}
Combining these estimates, we finally obtain that
\begin{align*}
H_{1}\leq C\|(u, b)\|_{H^3}\left(\| (\Lambda_1^{\alpha}, \Lambda_2^{\alpha}) u\|_{H^3}^2+\sum_{i=1}^3\|(\Lambda_{i'}^\beta, \Lambda_{i''}^\beta) b_{i}\|_{H^3}^2\right).
\end{align*}
\noindent\underline{\textbf {Estimate of $G_2$.}}
We will do the decomposition as follow
\begin{align*}
G_2&=\sum_{i=1}^3 \int \partial_{i}^3 b\cdot\nabla b\cdot\partial_{i}^3 u dx+3\sum_{m=1}^2\sum_{i=1}^3 \int \partial_i^{3-m} b\cdot\nabla\partial_i^{m} b\cdot\partial_{i}^3 u dx+\sum_{i=1}^3 \int b\cdot\nabla \partial_{i}^3 b\cdot\partial_{i}^3 u dx
\\&:=J_{1}+J_{2}+J_{3}.
\end{align*}
\noindent\textbf {Estimate of $J_1$.} We write
\begin{align*}
J_1=\sum_{i=1}^2 \int\partial_{i}^3 b\cdot\nabla b\cdot\partial_{i}^3 u dx+ \int \partial_3^3 b\cdot\nabla b\cdot\partial_3^3 u dx:=J_{11}+J_{12}.
\end{align*}
The term $J_{1}$ have a similar structure to $K_{1}$, we can deduce that
\begin{align*}
J_1\leq C\|(u, b)\|_{H^3}\left(\|(\Lambda_1^{\alpha}, \Lambda_2^{\alpha}) (u_{1}, u_{2})\|_{H^3}^2+\sum_{i=1}^3\|(\Lambda_{i'}^\beta, \Lambda_{i''}^\beta) b_{i}\|_{H^3}^2\right).
\end{align*}
\noindent\textbf {Estimate of $J_2$.} We write
\begin{align*}
J_2=\sum_{i,m=1}^2 \int \partial_i^{3-m} b\cdot\nabla\partial_i^{m} b\cdot \partial_{i}^3 u dx+ \sum_{m=1}^2\int \partial_3^{3-m} b\cdot\nabla\partial_3^{m} u\cdot \partial_3^3 u dx:=J_{21}+J_{22}.
\end{align*}
For the estimate of $J_{21}$, we use the same idea as $K_{11}$. This yields
\[
J_{21}\leq C\|b\|_{H^3}\sum_{i=1}^2\left(\|\Lambda_i^{\alpha} u\|_{H^3}^2+\|(\Lambda_i^\beta, \Lambda_{i''}^\beta) b_{i'}\|_{H^3}^2\right).
\]
To estimate the term $J_{22}$, we consider the following four cases
\begin{align*}
1. ~ 3=&k=j ; \quad 2. ~ 3=k\neq j;\quad 3. ~ 3=j\neq k;\quad 4. ~ j\neq 3 ~and~ k\neq3.
\end{align*}
Case 1: $3=k=j$. The terms $J_{221}$ can be bounded in same way to $K_{121}$,
\[
J_{221}\leq C\|b\|_{H^3} \left(\|(\Lambda_1^{\alpha} u_{1}, \Lambda_2^{\alpha} u_{2})\|_{H^3}^2+\|(\Lambda_{3}^\beta b_{1}, \Lambda_{3}^\beta b_{2})\|_{H^3}^2\right).
\]
\noindent Case 2: $3= k $ and $ 3\neq j$, we do the following decomposition
\begin{align*}
J_{222}&=\sum_{m=1}^2\int\partial_{3}^{3-m} b_{1} \partial_{1}\partial_{3}^{m} b_{3}\partial_{3}^{2}\partial_{1} u_{1} dx +\sum_{m=1}^2\int\partial_{3}^{3-m} b_{1} \partial_{1}\partial_{3}^{m} b_{3}\partial_{3}^{2}\partial_{2} u_{2} dx \\
&~~~+\sum_{m=1}^2\int\partial_{3}^{3-m} b_{2} \partial_{2}\partial_{3}^{m} b_{3}\partial_{3}^{2}\partial_{1} u_{1} dx +\sum_{m=1}^2\int\partial_{3}^{3-m} b_{2}\partial_{2}\partial_{3}^{m} b_{3}\partial_{3}^{2}\partial_{2} u_{2} dx \\
&:=J_{2221}+J_{2222}+J_{2223}+J_{2224}.
\end{align*}
For the first term $J_{2221}$, by using again Lemma \ref{lemmage} (ii, iii) and Young inequality, we find that
\begin{align*}
J_{2221}\leq&C\sum_{m=1}^2 \|\partial_{3}^{2}\partial_{1} u_{1}\|^{1-\frac{1}{2\alpha}} \|\Lambda_1^{\alpha}\partial_{3}^{2}\partial_{1} u_{1}\|^\frac{1}{2\alpha} \|\partial_{1}\partial_{3}^{m} b_{3}\|^{1-\frac{1}{2\beta}} \|\Lambda_{2}^{\beta}\partial_{1}\partial_{3}^{m} b_{3} \\&\times\|^\frac{1}{2\beta}\|\partial_{3}^{3-m} b_{1}\|^\frac{1}{2} \|\partial_{3}\partial_{3}^{3-m} b_{1}\|^\frac{1}{2}  \\
\leq&C \|b\|_{H^3}\|\Lambda_1^{\alpha}u_{1}\|_{H^3} \|\Lambda_1^{\beta}b_{3}\|_{H^3}^{1-\frac{1}{2\beta}} \|\Lambda_{2}^{\beta}b_{3}\|_{H^3}^{\frac{1}{2\beta}}\\
\leq &C \|b\|_{H^3}\left(\|\Lambda_1^{\alpha}u_{1}\|_{H^3}^2+ \|(\Lambda_1^\beta, \Lambda_{2}^\beta) b_{3}\|_{H^3}^2\right).
\end{align*}
In the same way, we also deduce that
\begin{align*}
J_{2222}+J_{2223}+J_{2224}\leq &C\| b\|_{H^3}\left(\| (\Lambda_1^{\alpha}, \Lambda_2^{\alpha})(u_{1}, u_{2})\|_{H^3}^2+\|(\Lambda_1^\beta, \Lambda_{2}^\beta) b_{3}\|_{H^3}^2\right).
\end{align*}
Case 3: $3=j$ and $3\neq k$. For the term $J_{223}$, we split it into two parts
\begin{align*}
J_{223}=\sum_{m=1}^2\int\partial_3^{3-m} b_{3}\partial_3 \partial_3^{m} b_{1}\partial_3^3 u_1 dx+\sum_{m=1}^2\int\partial_3^{3-m} b_{3}\partial_3 \partial_3^{m} b_{2}\partial_3^3 u_2 dx:= J_{2231}+J_{2232}.
\end{align*}
Applying again Lemma \ref{lemmage} (i, ii, iv) and Young inequality, we infer that
\begin{align*}
J_{2231}\leq&C\sum_{m=1}^2 \|\partial_{3}^3 u_{1}\|^{1-\frac{1}{2\alpha}} \|\Lambda_2^{\alpha}\partial_{3}^3 u_{1}\|^\frac{1}{2\alpha} \|\partial_{3}^{m+1} b_{1}\|^{1-\frac{1}{2\beta}} \|\Lambda_{3}^{\beta}\partial_{3}^{m+1} b_{1}\|^\frac{1}{2\beta} 
\\ &\times\|\partial_{3}^{3-m} b_{3}\|^{1-\frac{1}{2\beta}} \|\Lambda_{1}^{\beta}\partial_{3}^{3-m} b_{3}\|^\frac{1}{2\beta} \\
\leq&C \|u\|_{H^3}^{1-\frac{1}{2\alpha}} \|\Lambda_2^{\alpha} u_{1}\|_{H^3}^\frac{1}{2\alpha} \|b\|_{H^3}^{(1-\frac{1}{2\beta})(\frac{1}{2\alpha}+\frac{1}{2\beta}-1)} \|\Lambda_{3}^{\beta} b_{1}\|_{H^3}^{(1-\frac{1}{2\beta})(2-\frac{1}{2\alpha}-\frac{1}{2\beta})}\\ &\times \|\Lambda_{3}^{\beta} b_{1}\|_{H^3}^\frac{1}{2\beta} \|b\|_{H^3}^{1-\frac{1}{2\beta}} \| b\|_{H^3}^{\frac{1}{2\beta}(\frac{1}{2\alpha}+\frac{1}{2\beta}-1)} \|\Lambda_{1}^{\beta} b_{3}\|_{H^3}^{\frac{1}{2\beta}(2-\frac{1}{2\alpha}-\frac{1}{2\beta})} \\
\leq& C \|(u, b)\|_{H^3}\left(\|\Lambda_2^{\alpha} u_{1}\|_{H^3}^2+\|(\Lambda_1^\beta b_{3}, \Lambda_{3}^\beta b_{1})\|_{H^3}^2\right)
\end{align*}
and
\begin{align*}
J_{2232}\leq& C\|(u, b)\|_{H^3}\left(\|\Lambda_2^{\alpha} u_{2}\|_{H^3}^2+\|(\Lambda_1^\beta b_{3}, \Lambda_{3}^\beta b_{2})\|_{H^3}^2\right).
\end{align*}
Case 4: $ j\neq 3 $ and $k\neq3$. We write that
\begin{align*}
J_{224}&=\sum_{m=1}^2\int\partial_{3}^{3-m} b_{1}\partial_{1}\partial_{3}^{m} b_{1} \partial_{3}^3 u_{1} dx+ \sum_{m=1}^2\int \partial_{3}^{3-m} b_{1}\partial_{1}\partial_{3}^{m} b_{2} \partial_{3}^3 u_{2} dx\\
&~~~+\sum_{m=1}^2\int \partial_{3}^{3-m} b_{2}\partial_{2}\partial_{3}^{m} b_{1} \partial_{3}^3 u_{1} dx+\sum_{m=1}^2\int\partial_{3}^{3-m} b_{2}\partial_{2}\partial_{3}^{m} b_{2} \partial_{3}^3 u_{2} dx\\
&:=J_{2241}+J_{2242}+J_{2243}+J_{2244}.
\end{align*}
Thanks to Lemma \ref{lemmage} (i, ii, iv) and Young inequality, we obtain
\begin{align*}
J_{2241}\leq&C\sum_{m=1}^2\|\partial_{3}^3 u_{1}\|^{1-\frac{1}{2\alpha}} \|\Lambda_1^{\alpha}\partial_{3}^3 u_{1}\|^\frac{1}{2\alpha} \|\partial_{1}\partial_{3}^{m} b_{1}\|^{1-\frac{1}{2\beta}} \|\Lambda_{2}^{\beta}\partial_{1}\partial_{3}^{m} b_{1}\|^\frac{1}{2\beta} \\&\times \|\partial_{3}^{3-m} b_{1}\|^{1-\frac{1}{2\beta}} \|\Lambda_{3}^{\beta}\partial_{3}^{3-m} b_{1}\|^\frac{1}{2\beta} \\
\leq&C\|u\|_{H^3}^{1-\frac{1}{2\alpha}} \| \Lambda_1^{\alpha} u_{1}\|_{H^3}^\frac{1}{2\alpha} \|b\|_{H^3}^{(1-\frac{1}{2\beta})(\frac{1}{2\alpha}+\frac{1}{2\beta}-1)} \|\Lambda_{3}^{\beta} b_{1}\|_{H^3}^{(1-\frac{1}{2\beta})(2-\frac{1}{2\alpha}-\frac{1}{2\beta})} \\&\times \|\Lambda_{2}^{\beta} b_{1}\|_{H^3}^\frac{1}{2\beta} \|b\|_{H^3}^{1-\frac{1}{2\beta}} \|b\|_{H^3}^{\frac{1}{2\beta}(\frac{1}{2\alpha}+\frac{1}{2\beta}-1)} \|\Lambda_{3}^{\beta} b_{1}\|_{H^3}^{\frac{1}{2\beta}(2-\frac{1}{2\alpha}-\frac{1}{2\beta})} \\
\leq& C\|(u, b)\|_{H^3}\left(\|\Lambda_1^{\alpha} u_{1}\|_{H^3}^2+\|(\Lambda_2^\beta, \Lambda_{3}^\beta) b_{1}\|_{H^3}^2\right).
\end{align*}
Similarly,
\begin{align*}
J_{2242}+J_{2243}+J_{2244} \leq& C\|(u, b)\|_{H^3}\sum_{i=1}^2\left(\|\Lambda_{i}^{\alpha} u_{i}\|_{H^3}^2+\|(\Lambda_{i'}^\beta, \Lambda_{i''}^\beta) b_{i}\|_{H^3}^2\right).
\end{align*}
\noindent\underline{\textbf {Estimate of $H_2$.}} The term $H_2$ can be expanded under the form
\begin{align*}
H_2&=\sum_{i=1}^3 \int\partial_{i}^3 b\cdot\nabla u\cdot\partial_{i}^3 b dx+3\sum_{m=1}^2\sum_{i=1}^3 \int\partial_i^{3-m} b\cdot\nabla \partial_i^{m} u\cdot\partial_{i}^3 b dx +\sum_{i=1}^3 \int b\cdot\nabla \partial_{i}^3 u\cdot\partial_{i}^3 b dx\\&:=M_{1}+3M_{2}+M_{3}.
\end{align*}
Here we have $J_{3}+M_{3}=0$. Then we can bounded the term $M_{1}$.

\noindent\textbf {Estimate of $M_1$.} We discuss the following four cases
\begin{align*}
1. ~ i=&j=k ; \quad 2. ~ i=j\neq k;\quad 3. ~ i=k\neq j;\quad 4. ~ i\neq j\neq k.
\end{align*}
\noindent Case 1: $i=j=k$. To estimate the first item $M_{11}$, we first split it into the following four parts
\begin{align*}
M_{11}&=\sum_{i=1}^3 \int \partial_{i}^2\partial_{i'} b_{i'}\partial_{i} u_{i}\partial_{i}^2\partial_{i'} b_{i'} dx+\sum_{i=1}^3 \int \partial_{i}^2\partial_{i'} b_{i'}\partial_{i} u_{i}\partial_{i}^2\partial_{i''} b_{i''} dx \\
&~~~+\sum_{i=1}^3\int \partial_{i}^2\partial_{i''} b_{i''}\partial_{i} u_{i}\partial_{i}^2\partial_{i'} b_{i'} dx+\sum_{i=1}^3\int \partial_{i}^2\partial_{i''} b_{i''}\partial_{i} u_{i}\partial_{i}^2\partial_{i''} b_{i''} dx\\
&:=M_{111}+M_{112}+M_{113}+M_{114}.
\end{align*}
We use again Lemma \ref{lemmage} (ii, iii) and Young inequality to get
\begin{align*}
M_{111}\leq&C\sum_{i=1}^3 \|\partial_{i}^2\partial_{i'} b_{i'}\|^{1-\frac{1}{2\beta}}\|\Lambda_{i}^{\beta}\partial_{i}^2\partial_{i'} b_{i'}\|^\frac{1}{2\beta} \|\partial_{i}^2\partial_{i'} b_{i'}\|^{1-\frac{1}{2\beta}} \|\Lambda_{i''}^{\beta}\partial_{i}^2\partial_{i'} b_{i'}\|^\frac{1}{2\beta} \|\partial_i u_i\|^{\frac{1}{2}}\| \partial_{i'i} u_i\|^\frac{1}{2}\\
\leq&C\sum_{i=1}^3\|\Lambda_{i}^{\beta} b_{i'}\|_{H^3}^{2-\frac{1}{2\beta}}\| \Lambda_{i''}^{\beta} b_{i'}\|_{H^3}^\frac{1}{2\beta}\|u\|_{H^3}
\leq C\|u\|_{H^3}\sum_{i=1}^3\|(\Lambda_i^\beta, \Lambda_{i''}^\beta) b_{i'}\|_{H^3}^2.
\end{align*}
Similarly,
\begin{align*}
M_{112}+M_{113}+M_{114}\leq C\|u\|_{H^3}\sum_{i=1}^3\|(\Lambda_i^\beta, \Lambda_{i''}^\beta) b_{i'}\|_{H^3}^2.
\end{align*}
Note that the case $2, 3$ and $4$ can be treated by the way of the case 1, and then we obtain that
\begin{align*}
M_{1}\leq C\|u\|_{H^3}\sum_{i=1}^3\|(\Lambda_i^\beta, \Lambda_{i''}^\beta) b_{i'}\|_{H^3}^2.
\end{align*}

\noindent\textbf {Estimate of $M_2$.} For the last term $M_2$, we split it into two parts
\begin{align*}
M_2=\sum_{i,m=1}^2 \int \partial_i^{3-m} b\cdot\nabla \partial_i^{m} u\cdot\partial_{i}^3 b dx+\sum_{m=1}^2\int \partial_3^{3-m} b\cdot\nabla \partial_3^{m} u\cdot\partial_3^2 b dx:=M_{21}+M_{22}.
\end{align*}
The way of the first term $M_{21}$ is similar to the term $K_{11}$. We obtain
\begin{align*}\label{k11}
M_{21}\leq C\|b\|_{H^3}\sum_{i=1}^2\left(\|\Lambda_i^{\alpha} u\|_{H^3}^2+\|(\Lambda_i^\beta, \Lambda_{i''}^\beta) b_{i'}\|_{H^3}^2\right).
\end{align*}
To estimate the second term $M_{22}$, we will discuss it in three cases
\begin{align*}
1. ~ 3=&k ; \qquad 2. ~ 3=j\neq k;\qquad 3. ~ 3\neq j\neq k.
\end{align*}
Case 1: $3=k$. We decompose $M_{221}$ into four terms also by using the divergence free condition $div~u=div~b=0$,
\begin{align*}
M_{221}&=\sum_{m=1}^2\int \partial_3^{3-m} b\cdot\nabla\partial_3^{m-1}\partial_{1} u_{1}\partial_{3}^2\partial_{1} b_{1}dx +\sum_{m=1}^2\int\partial_3^{3-m} b\cdot\nabla\partial_3^{m-1} \partial_{1} u_{1}\partial_{3}^2\partial_{2} b_{2}dx \\
&\quad+\sum_{m=1}^2\int \partial_3^{3-m} b\cdot\nabla\partial_3^{m-1}\partial_{2} u_{2}\partial_{3}^2\partial_{1} b_{1}dx +\sum_{m=1}^2\int\partial_3^{3-m} b\cdot\nabla\partial_3^{m-1} \partial_{2} u_{2}\partial_{3}^2\partial_{2} b_{2}dx \\
&:=M_{2211}+M_{2212}+M_{2213}+M_{2214}.
\end{align*}
We start with the first term $M_{2211}$. The remaining three terms are treated similarly to the term $M_{2211}$. By using Lemma \ref{lemmage} (ii, iii) and Young inequality, we obtain
\begin{align*}
M_{2211}\leq&C\sum_{m=1}^2 \|\nabla\partial_3^{m-1}\partial_{1} u_{1}\|^{1-\frac{1}{2\alpha}} \|\Lambda_{1}^{\alpha}\nabla \partial_3^{m-1}\partial_{1} u_{1}\|^\frac{1}{2\alpha} \|\partial_{3}^2\partial_{1} b_{1}\|^{1-\frac{1}{2\beta}} \|\Lambda_{3}^{\beta}\partial_{3}^2\partial_{1} b_{1}\|^\frac{1}{2\beta} \\&\times\|\partial_3^{3-m} b\|^{\frac{1}{2}} \|\partial_{2}\partial_3^{3-m} b\|^\frac{1}{2}\\
\leq&C\|\Lambda_{1}^{\alpha} u_{1}\|_{H^3}\|\Lambda_{3}^{\beta} b_{1}\|_{H^3}\|b\|_{H^3}
\leq C\|b\|_{H^3}(\|\Lambda_{1}^{\alpha} u_{1}\|_{H^3}^2+\|\Lambda_{3}^{\beta} b_{1}\|_{H^3}^2)
\end{align*}
and
\begin{align*}
M_{2212}+M_{2213}+M_{2214}\leq C\|b\|_{H^3}\left(\|(\Lambda_{1}^{\alpha} u_{1}, \Lambda_{2}^{\alpha} u_{2})\|_{H^3}^2+\|(\Lambda_{3}^{\beta} b_{1}, \Lambda_{3}^{\beta} b_{2})\|_{H^3}^2\right).
\end{align*}
Case 2: $3=j\neq k$. We expand
\begin{align*}
M_{222}=\sum_{m=1}^2 \int\partial_3^{3-m} b_{3}\partial_{3}\partial_{3}^{m} u_{1}\partial_{3}^3 b_{1}dx +\sum_{m=1}^2 \int\partial_3^{3-m} b_{3}\partial_{3}\partial_{3}^{m} u_{2}\partial_{3}^3 b_{2} dx :=M_{2221}+M_{2222}.
\end{align*}
As we have done to handle the first term $M_{2221}$, we write thanks to Lemma \ref{lemmage} (i, ii, iv) and Young inequality that
\begin{align*}
M_{2221}
\leq&C\|\partial_{3}^{m+1} u_{1}\|^{1-\frac{1}{2\alpha}} \|\Lambda_1^{\alpha}\partial_{3}^{m+1} u_{1}\|^\frac{1}{2\alpha} \|\partial_{3}^3 b_{1}\|^{1-\frac{1}{2\beta}} \|\Lambda_{3}^{\beta}\partial_{3}^3 b_{1}\|^\frac{1}{2\beta} \|\partial_{3}^{3-m} b_{3}\|^{1-\frac{1}{2\beta}} \|\Lambda_{2}^{\beta}\partial_{3}^{3-m} b_{3}\|^\frac{1}{2\beta} \\
\leq&C\|u\|_{H^3}^{1-\frac{1}{2\alpha}} \|\Lambda_1^{\alpha} u_{1}\|_{H^3}^\frac{1}{2\alpha} \|b\|_{H^3}^{(1-\frac{1}{2\beta})(\frac{1}{2\alpha}+\frac{1}{2\beta}-1)} \|\Lambda_{3}^{\beta} b_{1}\|_{H^3}^{(1-\frac{1}{2\beta})(2-\frac{1}{2\alpha}-\frac{1}{2\beta})} \\&\times \|\Lambda_{3}^{\beta} b_{1}\|_{H^3}^\frac{1}{2\beta} \|b\|_{H^3}^{1-\frac{1}{2\beta}} \|b\|_{H^3}^{\frac{1}{2\beta}(\frac{1}{2\alpha}+\frac{1}{2\beta}-1)} \|\Lambda_{2}^{\beta} b_{3}\|_{H^3}^{\frac{1}{2\beta}(2-\frac{1}{2\alpha}-\frac{1}{2\beta})} \\
\leq& C\|(u, b)\|_{H^3}\left(\|\Lambda_1^{\alpha} u_{1}\|_{H^3}^2+\|(\Lambda_2^\beta b_{3}, \Lambda_{3}^\beta b_{1})\|_{H^3}^2\right).
\end{align*}
In a similar way, this yields
\begin{align*}
M_{2222}\leq& C\|(u, b)\|_{H^3}\left(\|\Lambda_1^{\alpha} u_{2}\|_{H^3}^2+\|(\Lambda_2^\beta b_{3}, \Lambda_{3}^\beta b_{2})\|_{H^3}^2\right).
\end{align*}
Case 3: $3\neq j\neq k$. We divided $M_{223}$ into four terms as follows
\begin{align*}
M_{223}&=\sum_{m=1}^2\int\partial_3^{3-m} b_{1}\partial_{1}\partial_3^{m} u_{1}\partial_{3}^3 b_{1}dx +\sum_{m=1}^2\int \partial_3^{3-m} b_{1}\partial_{1}\partial_3^{m} u_{2}\partial_{3}^3 b_{2} dx \\
&~~~+\sum_{m=1}^2\int \partial_3^{3-m} b_{2}\partial_{2}\partial_3^{m} u_{1}\partial_{3}^3 b_{1} dx +\sum_{m=1}^2\int \partial_3^{3-m} b_{2}\partial_{2}\partial_3^{m} u_{2}\partial_{3}^3 b_{2} dx\\
&:=M_{2231}+M_{2232}+M_{2233}+M_{2234}.
\end{align*}
The terms $M_{2231}$ and $M_{2211}$ can be controlled in same way and hence, we find
\begin{align*}
M_{2231}\leq&C\sum_{m=1}^2 \|\partial_{1}\partial_3^{m} u_{1}\|^{1-\frac{1}{2\alpha}} \|\Lambda_{1}^{\alpha} \partial_{1}\partial_3^{m} u_{1}\|^\frac{1}{2\alpha} \|\partial_{3}^3 b_{1}\|^{1-\frac{1}{2\beta}} \|\Lambda_{3}^{\beta}\partial_{3}^3 b_{1}\|^\frac{1}{2\beta} \|\partial_3^{3-m} b_{1}\|^{\frac{1}{2}} \|\partial_{2}\partial_{3}^{3-m} b_{1}\|^\frac{1}{2}\\
\leq&C \|\Lambda_{1}^{\alpha} u_{1}\|_{H^3} \|\Lambda_{3}^{\beta} b_{1}\|_{H^3}\|b\|_{H^3}
\leq C \|b\|_{H^3}(\|\Lambda_{1}^{\alpha} u_{1}\|_{H^3}^2+\|\Lambda_{3}^{\beta} b_{1}\|_{H^3}^2)
\end{align*}
and
\begin{align*}
M_{2232}+M_{2233}+M_{2234}\leq C\|b\|_{H^3}\|(\Lambda_{2}^{\alpha} u_{1}, \Lambda_{1}^{\alpha}u_{2}, \Lambda_{2}^{\alpha} u_{2}, \Lambda_{3}^{\beta} b_{1}, \Lambda_{3}^{\beta} b_{2})\|_{H^3}^2.
\end{align*}
Combining these estimates of $G_1, H_1, G_2$ and $H_2$, we finally get that
\begin{align*}
\frac{1}{2}\frac{d}{d t}&\|(u, b)\|_{H^3}^2 +\|(\nu_1^{\frac{1}{2}}\Lambda_1^{\alpha}, \nu_2^{\frac{1}{2}}\Lambda_2^{\alpha})u\|_{H^3}^2+\sigma\nu_3 \|\Lambda_3^{\alpha}u\|_{H^3}^2+\mu\sum_{i=1}^3\|(\Lambda_{i'}^\beta, \Lambda_{i''}^\beta) b_{i}\|_{H^3}^2\\
&\leq C\|(u, b)\|_{H^3}\left(\| (\Lambda_1^{\alpha}, \Lambda_2^{\alpha}) u\|_{H^3}^2+\sigma\nu_3 \|\Lambda_3^{\alpha}u\|_{H^3}^2+\sum_{i=1}^3\|(\Lambda_{i'}^\beta, \Lambda_{i''}^\beta) b_{i}\|_{H^3}^2\right).
\end{align*}
Integrating in time we find
\begin{align*}
\sup_{\tau\in [0, t]}&\|(u, b)\|_{H^3}^2 +\int_0^t \|(\nu_1^{\frac{1}{2}}\Lambda_1^{\alpha}, \nu_2^{\frac{1}{2}}\Lambda_2^{\alpha})u\|_{H^3}^2 +\sigma\nu_3 \|\Lambda_3^{\alpha}u\|_{H^3}^2 +\mu\sum_{i=1}^3\|(\Lambda_{i'}^\beta, \Lambda_{i''}^\beta) b_{i}\|_{H^3}^2d\tau\\
&\leq C\|(u_{in}, b_{in})\|_{H^3}^2+C\sup_{\tau\in [0, t]}\|(u, b)\|_{H^3}
\\&\hspace{3.2cm} \times \left(\int_0^t\| (\Lambda_1^{\alpha}, \Lambda_2^{\alpha}) u\|_{H^3}^2+\sigma\nu_3 \|\Lambda_3^{\alpha}u\|_{H^3}^2+\sum_{i=1}^3\|(\Lambda_{i'}^\beta, \Lambda_{i''}^\beta) b_{i}\|_{H^3}^2d\tau\right).
\end{align*}
We deduce by the definition (\ref{energyfunctional}) that
\[
E(t)\leq E(0)+E(t)^{\frac{3}{2}}, \quad \forall ~ t\geq 0.
\]
This end the proof for the global stability of the system (\ref{bhe}). The uniqueness will be proven in the following. We firstly assume that $(u^{(1)}, \pi^{(1)}, b^{(1)})$ and $(u^{(2)}, \pi^{(2)}, b^{(2)})$ are two pairs of solutions of the system (\ref{bhe}) with the same initial data $(u_{in}, b_{in})$ on $[0, t]$. Denote
$$ \bar{u}=u^{(1)}-u^{(2)}, ~\bar{\pi}=\pi^{(1)}-\pi^{(2)}, ~\bar{b}=b^{(1)}-b^{(2)}.$$
Each equation corresponds to a difference, this yields
\begin{eqnarray}\label{bhed}
\begin{cases}
\partial_{t} \bar{u}+ u^{(1)}\cdot\nabla\bar{u} + \bar{u} \cdot\nabla u^{(2)}=-( \nu_1\Lambda_1^{2\alpha} + \nu_2\Lambda_2^{2\alpha}+ \sigma\nu_3\Lambda_2^{2\alpha})~ \bar{u}+\nabla \bar{\pi}-b^{(1)} \cdot\nabla \bar{b} \\ \hspace{9.5cm} -\bar{b} \cdot\nabla b^{(2)} +\partial_3 \bar{b}, \\
\partial_{t} \bar{b}+ u^{(1)} \cdot\nabla \bar{b}+\bar{u} \cdot\nabla b^{(2)}~=-\mu\begin{bmatrix} \Lambda_2^{2\beta} + \Lambda_3^{2\beta}\\ \Lambda_1^{2\beta} + \Lambda_3^{2\beta}\\ \Lambda_1^{2\beta} + \Lambda_2^{2\beta} \end{bmatrix} \bar{b} -b^{(1)}\cdot\nabla \bar{u}-\bar{b}\cdot\nabla u^{(2)} +\partial_3 \bar{u}, \\
div~ \bar{u}=div~ \bar{b}=0, \\
(\bar{u}, \bar{b})\vert _{t=0}=(\bar{u}_{in}, \bar{b}_{in}). \\
\end{cases}
\end{eqnarray}
Taking the $L^2$ inner product of the system (\ref{bhed}) with $(\bar{u}, \bar{b})$, we get that
\begin{align*}
\frac{1}{2}\frac{d}{dt}&\|(\bar{u}, \bar{b})\|^2+\|(\nu_1^{\frac{1}{2}}\Lambda_{1}^{\alpha}, \nu_2^{\frac{1}{2}}\Lambda_{2}^{\alpha}) \bar{u}\|^{2}\\
&\hspace{3cm}+\sigma\nu_3\|\Lambda_3^{\alpha}\bar{u}_3\|^2+\mu\left(\|(\Lambda_2^{\beta}, \Lambda_3^{\beta})\bar{b}_1\|^2+\|(\Lambda_1^{\beta}, \Lambda_3^{\beta})\bar{b}_2\|^2
+\|(\Lambda_1^{\beta}, \Lambda_2^{\beta})\bar{b}_3\|^2\right)\\
&=-\int \bar{u}\cdot\nabla u^{(2)}\cdot\bar{u} dx-\int \bar{b}\cdot\nabla b^{(2)}\cdot\bar{u} dx-\int \bar{u}\cdot\nabla b^{(2)}\cdot\bar{b} dx-\int \bar{b}\cdot\nabla u^{(2)}\cdot\bar{b} dx  \\&:=L_{1}+L_{2}+L_{3}+L_{4}.
\end{align*}
Form Lemma \ref{lemmage} (iii) and Young inequality, we infer
\begin{align*}
L_{1}\leq &C\|\bar{u}\|^{2-\frac{1}{\alpha}}\|\Lambda_{1}^{\alpha} \bar{u}\|^\frac{1}{2\alpha}\|\Lambda_{2}^{\alpha} \bar{u}\|^\frac{1}{2\alpha}\|\nabla u^{(2)}\|^{\frac{1}{2}}\| \partial_{3}\nabla u^{(2)}\|^\frac{1}{2}
\leq C\|\bar{u}\|^{2}+\frac{1}{6}\|(\nu_1^{\frac{1}{2}}\Lambda_{1}^{\alpha}, \nu_2^{\frac{1}{2}}\Lambda_{2}^{\alpha}) \bar{u}\|^{2}.
\end{align*}
To estimate the term $L_{2}$, it can be rewritten under the form
\begin{align*}
L_{2}= -\sum_{i=1}^2 \int \bar{b}_i\partial_i b^{(2)}\cdot\bar{u} dx- \int \bar{b}_3\partial_3 b^{(2)}\cdot\bar{u} dx:=L_{21}+L_{22}.
\end{align*}
For the term $L_{21}$, this yields according to again Lemma \ref{lemmage} (iii) and Young inequality, we find that
\begin{align*}
L_{21}\leq& C\sum_{i=1}^2\|\bar{u}\|^{1-\frac{1}{2\alpha}}\|\Lambda_{i}^{\alpha} \bar{u}\|^\frac{1}{2\alpha}\|\bar{b}_i\|^{1-\frac{1}{2\beta}}\|\Lambda_{i'}^{\beta} \bar{b}_i\|^\frac{1}{2\beta}\|\partial_i b^{(2)}\|^{\frac{1}{2}}\| \partial_{i''i} b^{(2)}\|^\frac{1}{2}\\
\leq& \sum_{i=1}^2 \left( C\|(\bar{u}, \bar{b}_{i})\|^{2} + \frac{\mu}{12}\|\Lambda_{i'}^{\beta} \bar{b}_{i}\|^{2}\right)+\frac{1}{12}\|(\nu_1^{\frac{1}{2}}\Lambda_{1}^{\alpha}, \nu_2^{\frac{1}{2}}\Lambda_{2}^{\alpha}) \bar{u}\|^{2}
\end{align*}
and
\begin{align*}
L_{22}\leq& C\|\bar{u}\|^{1-\frac{1}{2\alpha}}\|\Lambda_{1}^{\alpha} \bar{u}\|^\frac{1}{2\alpha}\|\bar{b}_3\|^{1-\frac{1}{2\beta}}\|\Lambda_{2}^{\beta} \bar{b}_3\|^\frac{1}{2\beta}\|\partial_3 b^{(2)}\|^{\frac{1}{2}}\| \partial_{3}^3 b^{(2)}\|^\frac{1}{2}\\
\leq& C\|(\bar{u}, \bar{b}_{3})\|^{2} +\frac{1}{12}\|\nu_1^{\frac{1}{2}}\Lambda_{1}^{\alpha} \bar{u}\|^{2}+ \frac{\mu}{12}\|\Lambda_{2}^{\beta} \bar{b}_{3}\|^{2}.
\end{align*}
By the same computation as above, we obtain
\begin{align*}
L_{3}\leq& C\|(\bar{u}, \bar{b})\|^{2} + \frac{1}{6}\|(\nu_1^{\frac{1}{2}}\Lambda_{1}^{\alpha}, \nu_2^{\frac{1}{2}}\Lambda_{2}^{\alpha}) \bar{u}\|^{2}+ \frac{\mu}{6}\sum_{i=1}^3\|(\Lambda_{i'}^{\beta}, \Lambda_{i''}^{\beta})\bar{b}_i\|^{2}.
\end{align*}
To bounded the term $L_{4}$, we decompose it into three terms as follow
\begin{align*}
L_{4}&=-\sum_{i=1}^3 \int \bar{b}_i\partial_i u_i^{(2)}\bar{b}_i dx- \sum_{i=1}^3 \int \bar{b}_i\partial_i u_{i'}^{(2)}\bar{b}_{i'} dx- \sum_{i=1}^3 \int \bar{b}_i\partial_i u_{i''}^{(2)}\bar{b}_{i''} dx\\
 &:=L_{41}+L_{42}+L_{43}.
\end{align*}
For $L_{41}$, we get by using again Lemma \ref{lemmage} (iii) and Young inequality that
\begin{align*}
L_{41}\leq& C\sum_{i=1}^3\|\bar{b}_i\|^{2-\frac{1}{\beta}} \|\Lambda_{i'}^{\beta} \bar{b}_i\|^\frac{1}{2\beta} \|\Lambda_{i''}^{\beta} \bar{b}_i\|^\frac{1}{2\beta} \|\partial_i u_i^{(2)}\|^{\frac{1}{2}} \|\partial_{i}^3 u_i^{(2)}\|^\frac{1}{2} \leq C \|\bar{b}\|^{2} + \frac{\mu}{18}\sum_{i=1}^3\|(\Lambda_{i'}^{\beta}, \Lambda_{i''}^{\beta})\bar{b}_i\|^{2}.
\end{align*}
In a similar way, we thus obtain
\begin{align*}
L_{42}+L_{43}\leq C\|\bar{b}\|^{2} + \frac{\mu}{9}\sum_{i=1}^3\|(\Lambda_{i'}^{\beta}, \Lambda_{i''}^{\beta})\bar{b}_i\|^{2}.
\end{align*}
Combining this estimate, we have
\begin{align*}
\frac{1}{2}&\frac{d}{dt}\|(\bar{u}, \bar{b})\|^2+\|(\nu_1^{\frac{1}{2}}\Lambda_{1}^{\alpha}, \nu_2^{\frac{1}{2}}\Lambda_{2}^{\alpha}) \bar{u}\|^{2}\\
&\hspace{3.3cm}+\sigma\nu_3\|\Lambda_3^{\alpha}\bar{u}_{3}\|^2+\mu(\|(\Lambda_2^{\beta}, \Lambda_3^{\beta})\bar{b}_1\|^2+\|(\Lambda_1^{\beta}, \Lambda_3^{\beta})\bar{b}_2\|^2
+\|(\Lambda_1^{\beta}, \Lambda_2^{\beta})\bar{b}_3\|^2)\\
&\leq C\|(\bar{u}, \bar{b})\|^{2} + \frac{1}{2}\|(\nu_1^{\frac{1}{2}}\Lambda_{1}^{\alpha}, \nu_2^{\frac{1}{2}}\Lambda_{2}^{\alpha}) \bar{u}\|^{2}+ \frac{\mu}{2}\left(\|(\Lambda_2^{\beta}, \Lambda_3^{\beta})\bar{b}_1\|^2+\|(\Lambda_1^{\beta}, \Lambda_3^{\beta})\bar{b}_2\|^2
+\|(\Lambda_1^{\beta}, \Lambda_2^{\beta})\bar{b}_3\|^2\right).
\end{align*}
It follows form Gr\"{o}nwall inequality that
\begin{align*}
\|(\bar{u}, \bar{b})\|^2=0.
\end{align*}
This finishes the proof of Theorem \ref{theoreml}.
\end{proof}

\section{Proof of Theorem \ref{theorem2}.}\label{sec4}
On the basis of Theorem \ref{theoreml}, we obtain that the solution $(u^{\nu}, b^{\nu})$ of the system (\ref{bhev}) satisfies
\begin{align}\label{neng21}
\|(u^{\nu}, b^{\nu})\|_{H^3}^2+&\int_0^t \|(\Lambda_1^{\alpha}, \Lambda_2^{\alpha})u^{\nu}\|_{H^3}^2 d\tau+\nu\int_0^t \|\Lambda_3^{\alpha}u^{\nu}\|_{H^3}^2 d\tau  \nonumber \\ & +\int_0^t \|(\Lambda_2^{\beta}, \Lambda_3^{\beta})b^{\nu}_1\|_{H^3}^2+\|(\Lambda_1^{\beta}, \Lambda_3^{\beta})b^{\nu}_2\|_{H^3}^2+\|(\Lambda_1^{\beta}, \Lambda_2^{\beta})b^{\nu}_3\|_{H^3}^2 d\tau\leq C\epsilon^2
\end{align}
and the solution $(u^{0}, b^{0})$ of the system (\ref{bhev0}) satisfies
\begin{align}\label{neng22}
\|(u^{0}, b^{0})\|_{H^3}^2+&\int_0^t \|(\Lambda_1^{\alpha}, \Lambda_2^{\alpha})u^{0}\|_{H^3}^2 d\tau    \nonumber \\
&+\int_0^t \|(\Lambda_2^{\beta}, \Lambda_3^{\beta})b^{0}_1\|_{H^3}^2+\|(\Lambda_1^{\beta}, \Lambda_3^{\beta})b^{0}_2\|_{H^3}^2+\|(\Lambda_1^{\beta}, \Lambda_2^{\beta})b^{0}_3\|_{H^3}^2 d\tau\leq C\epsilon^2.
\end{align}
We now show that the convergence rate of the viscosity solutions of the system (\ref{bhev}) to the system (\ref{bhev0}) in the section.

\begin{proof}
Suppose that $(u^{\nu}, b^{\nu})$ and $(u^{0}, b^{0})$ are the solution to the system (\ref{bhev}) and the system (\ref{bhev0}) with the same initial data $(u_{0}, b_{0})$, respectively. And denote
$$\tilde{u}:=u^{\nu}-u^{0}, \quad \tilde{p}:=p^{\nu}-p^{0}, \quad \tilde{b}:=b^{\nu}-b^{0}$$
satisfy
\begin{eqnarray}\label{bhevd}
\begin{cases}
\partial_{t} \tilde{u}+ u^{\nu}\cdot\nabla\tilde{u}=-\left( \Lambda_1^{2\alpha} + \Lambda_2^{2\alpha} \right) \tilde{u} - \nu\Lambda_3^{2\alpha}u^{\nu} +\nabla \tilde{p} -\tilde{u}\cdot\nabla u^0-b^{\nu} \cdot\nabla \tilde{b}-\tilde{b} \cdot\nabla b^{0}+\partial_3 \tilde{b}, \\
\partial_{t} \tilde{b}+ u^{\nu} \cdot\nabla \tilde{b}=-\begin{bmatrix} \Lambda_2^{2\beta} + \Lambda_3^{2\beta}\\ \Lambda_1^{2\beta} + \Lambda_3^{2\beta}\\ \Lambda_1^{2\beta} + \Lambda_2^{2\beta} \end{bmatrix} \tilde{b}-\tilde{u}\cdot\nabla b^0-b^{\nu} \cdot\nabla \tilde{u}-\tilde{b} \cdot\nabla u^{0}+\partial_3 \tilde{u}, \\
div~ \tilde{u}=div~ \tilde{b}=0, \\
(\tilde{u}, \tilde{b})\vert _{t=0}=(0, 0). \\
\end{cases}
\end{eqnarray}
Dotting (\ref{bhevd}) by $(\tilde{u}, \tilde{b})$ in $L^2$. And applying $\partial_i$ to (\ref{bhevd}) and taking $L^2$-scalar product with $(\partial_i\tilde{u}, \partial_i\tilde{b})$. Gathering this estimate, we obtain that
\begin{align*}
\frac{1}{2}\frac{d}{dt}\|(\tilde{u}, \tilde{b})\|_{H^1}^2+&\|(\Lambda_{1}^{\alpha}, \Lambda_{2}^{\alpha}) \tilde{u}\|^{2} +\|(\Lambda_2^{\beta}, \Lambda_3^{\beta})\tilde{b}_1\|_{H^1}^2\\ &+\|(\Lambda_1^{\beta}, \Lambda_3^{\beta})\tilde{b}_2\|_{H^1}^2
+\|(\Lambda_1^{\beta}, \Lambda_2^{\beta})\tilde{b}_3\|_{H^1}^2:=\sum_{i=1}^5 N_{i}+\sum_{i=1}^5 Q_{i}+\sum_{i=1}^4 R_{i},
\end{align*}
where
\begin{align*}
&N_{1}=-\nu\int \Lambda_3^{2\alpha}u^{\nu}\cdot\tilde{u}dx, &&N_{2}=-\int \tilde{u}\cdot\nabla u^{0}\cdot\tilde{u} dx,\\
&N_{3}=-\int \tilde{b}\cdot\nabla b^{0}\cdot\tilde{u} dx,  &&N_{4}=-\int \tilde{u}\cdot\nabla b^{0}\cdot\tilde{b} dx, \\
&N_{5}=-\int \tilde{b}\cdot\nabla u^{0}\cdot\tilde{b} dx,   &&Q_{1}=-\nu\sum_{i=1}^3\int \Lambda_3^{2\alpha}\partial_{i} u^{\nu}\cdot\partial_{i}\tilde{u}dx,\\
&Q_{2}=-\sum_{i=1}^3\int \partial_{i}u^{\nu}\cdot\nabla\tilde{u}\cdot\partial_{i}\tilde{u}dx,    &&Q_{3}=-\sum_{i=1}^3\int \partial_{i}(\tilde{u}\cdot\nabla u^0)\cdot\partial_{i}\tilde{u}dx,\\
&Q_{4}=-\sum_{i=1}^3\int \partial_{i}(b^{\nu}\cdot\nabla\tilde{b})\cdot\partial_{i}\tilde{u}dx, &&Q_{5}=-\sum_{i=1}^3\int \partial_{i}(\tilde{b} \cdot\nabla b^{0})\cdot\partial_{i}\tilde{u}dx, \\
&R_{1}=-\sum_{i=1}^3\int \partial_{i}(\tilde{u}\cdot\nabla b^{0})\cdot\partial_{i}\tilde{b}dx,   &&R_{2}=-\sum_{i=1}^3\int \partial_{i}(b^{\nu}\cdot\nabla\tilde{u})\cdot\partial_{i}\tilde{b}dx, \\
&R_{3}=-\sum_{i=1}^3\int \partial_{i}u^{\nu}\cdot\nabla\tilde{b}\cdot\partial_{i}\tilde{b}dx,   &&R_{4}=-\sum_{i=1}^3\int \partial_{i}(\tilde{b}\cdot\nabla u^0)\cdot\partial_{i}\tilde{b}dx. &
\end{align*}
Combining the H\"{o}lder inequality, Lemma \ref{lemmage} (i) and the inequality (\ref{neng21}), we find that
\begin{align*}
N_{1}\leq \frac{\nu}{2}\|\Lambda_3^{2\alpha}u^{\nu}\|^2+\frac{\nu}{2}\|\tilde{u}\|^2\leq C\frac{\nu}{2}\|u^{\nu}\|_{H^2}^2+\frac{\nu}{2}\|\tilde{u}\|^2\leq C\nu+\frac{\nu}{2}\|\tilde{u}\|^2
\end{align*}
and
\begin{align*}
Q_{1}\leq C\nu+\frac{\nu}{2}\|\tilde{u}\|_{H^1}^2.
\end{align*}
To estimate the term $N_{2}$ we use Lemma \ref{lemmage} (iii) and Young inequality to get
\begin{align*}
N_{2}\leq &C\|\tilde{u}\|^{2-\frac{1}{\alpha}}\|\Lambda_{1}^{\alpha} \tilde{u}\|^\frac{1}{2\alpha}\|\Lambda_{2}^{\alpha} \tilde{u}\|^\frac{1}{2\alpha}\|\nabla u^{0}\|^{\frac{1}{2}}\| \partial_{3}\nabla u^{0}\|^\frac{1}{2}
\leq C\|\tilde{u}\|^{2}+\frac{1}{18}\|(\Lambda_{1}^{\alpha}, \Lambda_{2}^{\alpha}) \tilde{u}\|^{2}.
\end{align*}
For the term $N_{3}$, we rewrite
\begin{align*}
N_{3}= -\sum_{i=1}^2 \int \tilde{b}_i\partial_i b^{0}\cdot \tilde{u} dx- \int \tilde{b}_3\partial_3 b^{0}\cdot \tilde{u} dx:=N_{31}+N_{32}.
\end{align*}
This yields by using again Lemma \ref{lemmage} (iii) and Young inequality that
\begin{align*}
N_{31}\leq &C\sum_{i=1}^2\|\tilde{u}\|^{1-\frac{1}{2\alpha}}\|\Lambda_{1}^{\alpha} \tilde{u}\|^\frac{1}{2\alpha}\|\tilde{b}_i\|^{1-\frac{1}{2\beta}}\|\Lambda_{3}^{\beta} \tilde{b}_i\|^\frac{1}{2\beta}\|\partial_i b^{0}\|^{\frac{1}{2}}\| \partial_{2i} b^{0}\|^\frac{1}{2}\\
\leq &C\|(\tilde{u}, \tilde{b})\|^{2}+\frac{1}{36}\|\Lambda_{1}^{\alpha} \tilde{u}\|^{2}+\frac{1}{36}\sum_{i=1}^2\|\Lambda_{3}^{\beta} \tilde{b}_i\|^2
\end{align*}
and
\[
N_{32}\leq C\|(\tilde{u}, \tilde{b})\|^{2}+\frac{1}{36}\|\Lambda_{1}^{\alpha} \tilde{u}\|^{2}+\frac{1}{36}\|\Lambda_{2}^{\beta} \tilde{b}_3\|^2.
\]
The term $N_{4}$ can be estimated in the same way as the term $N_{3}$. For the last term $N_{5}$, we can decompose that
\begin{align*}
N_{5}= -\sum_{i=1}^3 \int \tilde{b}_i\partial_i u^{0}_{i} \tilde{b}_{i} dx-\sum_{i=1}^3 \int \tilde{b}_i\partial_i u^{0}_{i'} \tilde{b}_{i'} dx-\sum_{i=1}^3 \int \tilde{b}_i\partial_i u^{0}_{i'} \tilde{b}_{i'} dx:=N_{51}+N_{52}+N_{53}.
\end{align*}
The first term $N_{51}$ can be estimated by using again Lemma \ref{lemmage} (iii) and Young inequality,
\begin{align*}
N_{51}\leq &C\sum_{i=1}^3\|\tilde{b}_i\|^{2-\frac{1}{\beta}}\|\Lambda_{i'}^{\beta} \tilde{b}_i\|^\frac{1}{2\beta}\|\Lambda_{i''}^{\beta} \tilde{b}_i\|^\frac{1}{2\beta}\|\partial_i u^{0}_i\|^{\frac{1}{2}}\| \partial_{i}^{2} u^{0}_i\|^\frac{1}{2} \leq C\|\tilde{b}\|^{2}+\frac{1}{54}\sum_{i=1}^3\|(\Lambda_{i'}^{\beta}, \Lambda_{i''}^{\beta}) \tilde{b}_i\|^{2}.
\end{align*}
In the same way,
\begin{align*}
N_{52}+N_{53}\leq &C\|\tilde{b}\|^{2}+\frac{1}{27}\sum_{i=1}^3\left(\|\Lambda_{i'}^{\beta} \tilde{b}_i\|^{2}+\|\Lambda_{i''}^{\beta} \tilde{b}_{i'}\|^{2}+\|\Lambda_{i}^{\beta} \tilde{b}_{i''}\|^{2}\right).
\end{align*}
For the term $Q_{2}$ we use again Lemma \ref{lemmage} (iii) and Young inequality to obtain
\begin{align*}
Q_{2}& \leq C\sum_{i=1}^3 \|\partial_{i}\tilde{u}\|^{1-\frac{1}{2\alpha}} \|\Lambda_{1}^{\alpha} \partial_{i}\tilde{u}\|^\frac{1}{2\alpha} \|\nabla\tilde{u}\|^{1-\frac{1}{2\alpha}} \|\Lambda_{2}^{\alpha}\nabla\tilde{u}\|^\frac{1}{2\alpha}\|\partial_{i} u^{\nu}\|^{\frac{1}{2}}\|\partial_{3i} u^{\nu}\|^\frac{1}{2}\\&
\leq C\|\tilde{u}\|_{H^1}^{2}+\frac{1}{18}\|(\Lambda_{1}^{\alpha}, \Lambda_{2}^{\alpha}) \tilde{u}\|_{H^1}^{2}.
\end{align*}
In the similar way,
\begin{align*}
Q_{3}\leq C\|\tilde{u}\|_{H^1}^{2}+\frac{1}{18}\|(\Lambda_{1}^{\alpha}, \Lambda_{2}^{\alpha}) \tilde{u}\|_{H^1}^{2}.
\end{align*}
For the term $Q_{4}$ and $R_{2}$, we find that
\begin{align*}
Q_{4}+R_{2}= -\sum_{i=1}^3 \int \partial_{i}b^{\nu}\cdot\nabla\tilde{b}\cdot\partial_{i}\tilde{u}dx -\sum_{i=1}^3 \int \partial_{i}b^{\nu}\cdot\nabla\tilde{u}\cdot\partial_{i}\tilde{b}dx:=Q_{41}+R_{21}.
\end{align*}
The term $Q_{41}$ can be bounded by using Lemma \ref{lemmage} (iii) and Young inequality
\begin{align*}
Q_{41}=&-\sum_{i=1}^3 \int \partial_{i}b^{\nu}\cdot\nabla\tilde{b}_{1}\cdot\partial_{i}\tilde{u}_{1}dx-\sum_{i=1}^3 \int \partial_{i}b^{\nu}\cdot\nabla\tilde{b}_{2}\cdot\partial_{i}\tilde{u}_{2}dx-\sum_{i=1}^3 \int \partial_{i}b^{\nu}\cdot\nabla\tilde{b}_{3}\cdot\partial_{i}\tilde{u}_{3}dx\\
\leq &C\sum_{i=1}^3 \|\partial_{i}\tilde{u}_{1}\|^{1-\frac{1}{2\alpha}} \|\Lambda_{1}^{\alpha} \partial_{i}\tilde{u}_{1}\|^\frac{1}{2\alpha} \|\nabla\tilde{b}_{1}\|^{1-\frac{1}{2\beta}} \|\Lambda_{2}^{\beta}\nabla\tilde{b}_{1}\|^\frac{1}{2\beta} \|\partial_{i} b^{\nu}\|^{\frac{1}{2}} \|\partial_{3i} b^{\nu}\|^\frac{1}{2} \\
&+C\sum_{i=1}^3 \|\partial_{i}\tilde{u}_{2}\|^{1-\frac{1}{2\alpha}} \|\Lambda_{2}^{\alpha} \partial_{i}\tilde{u}_{2}\|^\frac{1}{2\alpha} \|\nabla\tilde{b}_{2}\|^{1-\frac{1}{2\beta}} \|\Lambda_{1}^{\beta}\nabla\tilde{b}_{2}\|^\frac{1}{2\beta} \|\partial_{i} b^{\nu}\|^{\frac{1}{2}} \|\partial_{3i} b^{\nu}\|^\frac{1}{2} \\
&+C\sum_{i=1}^3 \|\partial_{i}\tilde{u}_{3}\|^{1-\frac{1}{2\alpha}} \|\Lambda_{1}^{\alpha} \partial_{i}\tilde{u}_{3}\|^\frac{1}{2\alpha} \|\nabla\tilde{b}_{3}\|^{1-\frac{1}{2\beta}} \|\Lambda_{1}^{\beta}\nabla\tilde{b}_{3}\|^\frac{1}{2\beta} \|\partial_{i} b^{\nu}\|^{\frac{1}{2}} \|\partial_{3i} b^{\nu}\|^\frac{1}{2} \\
\leq& C\|(\tilde{u}, \tilde{b})\|_{H^1}^{2}+\frac{1}{18}\|(\Lambda_{1}^{\alpha}\tilde{u}_{1}, \Lambda_{2}^{\alpha}\tilde{u}_{2}, \Lambda_{1}^{\alpha}\tilde{u}_{3})\|_{H^1}^{2}+\frac{1}{18}\|(\Lambda_{2}^{\beta}\tilde{b}_{1}, \Lambda_{1}^{\beta}\tilde{b}_{2}, \Lambda_{2}^{\beta}\tilde{b}_{3})\|_{H^1}^{2}.
\end{align*}
Using the way similar to the term $Q_{41}$, we have that
\begin{align*}
Q_{5}\leq& C\|(\tilde{u}, \tilde{b})\|_{H^1}^{2}+\frac{1}{18}\|(\Lambda_{1}^{\alpha}, \Lambda_{2}^{\alpha}) \tilde{u}\|_{H^1}^{2}+\frac{1}{18}\|(\Lambda_{2}^{\beta}\tilde{b}_{1}, \Lambda_{1}^{\beta}\tilde{b}_{2}, \Lambda_{1}^{\beta}\tilde{b}_{3})\|_{H^1}^{2}, \\
R_{1}\leq& C\|(\tilde{u}, \tilde{b})\|_{H^1}^{2}+\frac{1}{18}\|(\Lambda_{1}^{\alpha}, \Lambda_{2}^{\alpha}) \tilde{u}\|_{H^1}^{2}+\frac{1}{18}\|(\Lambda_{2}^{\beta}\tilde{b}_{1}, \Lambda_{1}^{\beta}\tilde{b}_{2}, \Lambda_{1}^{\beta}\tilde{b}_{3})\|_{H^1}^{2}, \\
R_{21}\leq& C\|(\tilde{u}, \tilde{b})\|_{H^1}^{2}+\frac{1}{18}\|(\Lambda_{1}^{\alpha}\tilde{u}_{1}, \Lambda_{2}^{\alpha}\tilde{u}_{2}, \Lambda_{1}^{\alpha}\tilde{u}_{3})\|_{H^1}^{2}+\frac{1}{18}\|(\Lambda_{2}^{\beta}\tilde{b}_{1}, \Lambda_{1}^{\beta}\tilde{b}_{2}, \Lambda_{2}^{\beta}\tilde{b}_{3})\|_{H^1}^{2}.
\end{align*}
For the term $R_{3}$, by invoking Lemma \ref{lemmage} (iii) and Young inequality, we obtain
\begin{align*}
R_{3}=&-\sum_{i,k=1}^3 \int \partial_{i}u^{\nu}\cdot\nabla\tilde{b}_{k}\cdot\partial_{i}\tilde{b}_{k}dx\\
\leq &C\sum_{i,k=1}^3 \|\partial_{i}\tilde{b}_{k}\|^{1-\frac{1}{2\beta}} \|\Lambda_{k'}^{\beta} \partial_{i}\tilde{b}_{k}\|^\frac{1}{2\beta} \|\nabla\tilde{b}_{k}\|^{1-\frac{1}{2\beta}} \|\Lambda_{k''}^{\beta}\nabla\tilde{b}_{k}\|^\frac{1}{2\beta} \|\partial_{i} u^{\nu}\|^{\frac{1}{2}} \|\partial_{ik} u^{\nu}\|^\frac{1}{2} \\
\leq& C\|\tilde{b}\|_{H^1}^{2}+\frac{1}{18}\sum_{k=1}^3\|(\Lambda_{k'}^{\beta}, \Lambda_{k''}^{\beta})\tilde{b}_{k}\|_{H^1}^{2}.
\end{align*}
To estimate the term $R_{4}$, we decompose it into two terms
\begin{align*}
R_{4}= -\sum_{i=1}^3 \int \partial_{i}\tilde{b}\cdot\nabla u^{0}\cdot\partial_{i}\tilde{b}dx -\sum_{i=1}^3 \int \tilde{b}\cdot\nabla \partial_{i}u^{0}\cdot\partial_{i}\tilde{b}dx:=R_{41}+R_{42}.
\end{align*}
The term $R_{41}$ can be bounded by using again Lemma \ref{lemmage} (iii) and Young inequality
\begin{align*}
R_{41}=&-\sum_{i,k=1}^3 \int \partial_{i}\tilde{b}_{k}\partial_{k} u_{k}^{0}\partial_{i}\tilde{b}_{k}dx-\sum_{i,k=1}^3 \int \partial_{i}\tilde{b}_{k'}\partial_{k'} u_{k}^{0}\partial_{i}\tilde{b}_{k}dx-\sum_{i,k=1}^3 \int \partial_{i}\tilde{b}_{k''}\partial_{k''} u_{k}^{0}\partial_{i}\tilde{b}_{k}dx\\
\leq &C\sum_{i,k=1}^3 \|\partial_{i}\tilde{b}_{k}\|^{2-\frac{1}{\beta}} \|\Lambda_{k'}^{\beta} \partial_{i}\tilde{b}_{k}\|^\frac{1}{2\beta} \|\Lambda_{k''}^{\beta}\partial_{i}\tilde{b}_{k}\|^\frac{1}{2\beta} \|\partial_{k} u^{0}\|^{\frac{1}{2}} \|\partial_{k}^2 u^{0}\|^\frac{1}{2} \\
&+\sum_{i,k=1}^3 \|\partial_{i}\tilde{b}_{k}\|^{1-\frac{1}{2\beta}} \|\Lambda_{k'}^{\beta} \partial_{i}\tilde{b}_{k}\|^\frac{1}{2\beta} \|\partial_{i}\tilde{b}_{k'}\|^{1-\frac{1}{2\beta}} \|\Lambda_{k}^{\beta}\partial_{i}\tilde{b}_{k'}\|^\frac{1}{2\beta} \|\partial_{k'} u^{0}\|^{\frac{1}{2}} \|\partial_{k''k'} u^{0}\|^\frac{1}{2} \\
&+\sum_{i,k=1}^3 \|\partial_{i}\tilde{b}_{k}\|^{1-\frac{1}{2\beta}} \|\Lambda_{k'}^{\beta} \partial_{i}\tilde{b}_{k}\|^\frac{1}{2\beta} \|\partial_{i}\tilde{b}_{k''}\|^{1-\frac{1}{2\beta}} \|\Lambda_{k}^{\beta}\partial_{i}\tilde{b}_{k''}\|^\frac{1}{2\beta} \|\partial_{k''} u^{0}\|^{\frac{1}{2}} \|\partial_{k''}^2 u^{0}\|^\frac{1}{2} \\
\leq& C\|\tilde{b}\|_{H^1}^{2}+\frac{1}{36}\sum_{k=1}^3\|(\Lambda_{k'}^{\beta}, \Lambda_{k''}^{\beta})\tilde{b}_{k}\|_{H^1}^{2}.
\end{align*}
Similarly,
\begin{align*}
R_{42}\leq C\|\tilde{b}\|_{H^1}^{2}+\frac{1}{36}\sum_{k=1}^3\|(\Lambda_{k'}^{\beta}, \Lambda_{k''}^{\beta})\tilde{b}_{k}\|_{H^1}^{2}.
\end{align*}
Consequently, by gathering these estimates above, we obtain that
\begin{align*}
\frac{d}{dt}\|(\tilde{u}, \tilde{b})\|_{H^1}^2 +&\|(\Lambda_{1}^{\alpha}, \Lambda_{2}^{\alpha}) \tilde{u}\|_{H^1}^{2} +\|(\Lambda_2^{\beta}, \Lambda_3^{\beta})\tilde{b}_1\|_{H^1}^2 \\&+\|(\Lambda_1^{\beta}, \Lambda_3^{\beta})\tilde{b}_2\|_{H^1}^2 +\|(\Lambda_1^{\beta}, \Lambda_2^{\beta})\tilde{b}_3\|_{H^1}^2\leq C\|(\tilde{u}, \tilde{b})\|_{H^1}^{2}+C\nu.
\end{align*}
From the Gr\"{o}nwall inequality, we find
\begin{align*}
\|\tilde{u}(\tau)\|_{H^1}^2+\|\tilde{b}(\tau)\|_{H^1}^2\leq Ct\nu.
\end{align*}
This ends the proof of Theorem \ref{theorem2}.
\end{proof}

\noindent\textbf{Acknowledgments.} The work of Aibin Zang was supported in part  by the National Natural Science Foundation of China (Grant no. 12261039, 12061080), and Jiangxi Provincial Natural Science Foundation  (No. 20224ACB201004). The work of Yuelong Xiao was supported by National Natural Science Foundation of China (Grant no.11871412). The research of Xuemin Deng partially Supported by Postgraduate Scientific Research Innovation Project of Hunan Province (Grant no.CX20210603, XDCX2021B096).

%\bibliographystyle{plain}
%\bibliography{references}

\end{document}